\algnewcommand\RETURN{\STATE \textbf{Return: }}
\newcommand*\rel@kern[1]{\kern#1\dimexpr\macc@kerna}
\newcommand*\widebar[1]{%
  \begingroup
  \def\mathaccent##1##2{%
    \rel@kern{0.8}%
    \overline{\rel@kern{-0.8}\macc@nucleus\rel@kern{0.2}}%
    \rel@kern{-0.2}%
  }%
  \macc@depth\@ne
  \let\math@bgroup\@empty \let\math@egroup\macc@set@skewchar
  \mathsurround\z@ \frozen@everymath{\mathgroup\macc@group\relax}%
  \macc@set@skewchar\relax
  \let\mathaccentV\macc@nested@a
  \macc@nested@a\relax111{#1}%
  \endgroup
}
\titleformat{\section}[block]{\large \scshape \filcenter}{\thesection.}{0.5em}{}
\titleformat{\subsection}[runin]{\normalfont \bfseries}{\thesubsection.}{0.5 em}{}
\titleformat{\subsubsection}[runin]{\normalfont \bfseries}{\thesubsubsection.}{0.5 em}{}
\newtheorem{thm}{Theorem}
\newtheorem{prop}[thm]{Proposition}
\newtheorem{lemme}{Lemma}
\newtheorem{cor}{Corollary}
\newtheorem{hyp}{Assumption}
\newtheorem{Claim}{Claim}
\renewcommand{\d}{\, \mathrm{d}}
\newcommand{\R}{\mathbb{R}}
\renewcommand{\P}{\mathbb{P}\,}
\newcommand{\N}{\mathbb{N}}
\newcommand{\E}{\mathbb{E}}
\newcommand{\V}{\mathbb{V}}
\newcommand{\F}{\mathscr{F}}
\renewcommand{\L}{\mathbb{L}}
\newcommand{\M}{\mathcal{M}}
\newcommand{\var}{\mathrm{ var }}
\newcommand{\LL}{\mathbb{L}}
\newcommand{\XX}{\mathbb{X}}
\newcommand{\TT}{\mathbb{T}}
\newcommand{\YY}{\mathbb{Y}}
\newcommand{\1}{\mathbbm{1}}
\def\restriction#1#2{\mathchoice
              {\setbox1\hbox{${\displaystyle #1}_{\scriptstyle #2}$}
              \restrictionaux{#1}{#2}}
              {\setbox1\hbox{${\textstyle #1}_{\scriptstyle #2}$}
              \restrictionaux{#1}{#2}}
              {\setbox1\hbox{${\scriptstyle #1}_{\scriptscriptstyle #2}$}
              \restrictionaux{#1}{#2}}
              {\setbox1\hbox{${\scriptscriptstyle #1}_{\scriptscriptstyle #2}$}
              \restrictionaux{#1}{#2}}}
\def\restrictionaux#1#2{{#1\,\smash{\vrule height .8\ht1 depth .85\dp1}}_{\,#2}} 
\def\hlinewd#1{%
\noalign{\ifnum0=`}\fi\hrule \@height #1 %
\futurelet\reserved@a\@xhline}
\title[Estimating the conditional density by histograms and model selection]{Estimating the conditional density by  histogram type estimators and model selection}
\author{Mathieu Sart}
\date{October, 2016} 
 \address{Univ Lyon, UJM-Saint-Etienne, CNRS, Institut Camille Jordan UMR 5208, F-42023, SAINT-ETIENNE, France} 
\email{mathieu.sart@univ-st-etienne.fr}
\keywords{Adaptive estimation, Conditional density, Histogram, Model selection, Robust tests} 
\subjclass[2010]{62G05, 62G07}
\begin{document}
\maketitle
\begin{abstract}
We propose a new estimation procedure of the conditional density for independent and identically distributed data. Our procedure aims at using the data to select   a function among  arbitrary (at most countable) collections of candidates.  By using a deterministic Hellinger distance as loss, we prove that the selected function satisfies a non-asymptotic  oracle type inequality under minimal assumptions on the statistical setting.  We derive an adaptive piecewise constant estimator on a random partition that  achieves the expected rate of convergence over (possibly inhomogeneous and anisotropic) Besov spaces of small regularity. Moreover, we  show that this oracle inequality may lead to a general model selection theorem under very mild assumptions on the statistical setting.  This theorem guarantees the existence of  estimators possessing nice statistical properties under various assumptions on the conditional density (such as smoothness or structural ones).  
\end{abstract}
\section{Introduction}

Let $(X_i,Y_i)_{1\leq i \leq n}$ be $n$ independent and identically distributed random variables defined on an abstract probability space $(\Omega, \mathcal{E}, \P)$ with values in $\XX \times \YY$. We suppose  that  the conditional law  $\mathcal{L} (Y_i \mid X_i)$ admits a density $s (X_i, \cdot)$ with respect to a known $\sigma$-finite measure~$\mu$. In this paper, we address the problem of estimating the conditional density~$s$ on a given subset $A \subset \XX \times \YY$.

When $(X_i,Y_i)$ admits a joint density $f_{(X,Y)}$ with respect to a product measure $\nu \otimes \mu$,  one can rewrite $s$ as
$$s(x,y) = \frac{f_{(X,Y)} (x,y)}{f_X (x)} \quad \text{for all $x,y \in \XX \times \YY$ such that $f_{X} (x) > 0$,}$$
where $f_{X}$ stands for the  density of $X_i$ with respect to $\nu$.
A first approach to estimate $s$ was  introduced in the late 60's by~\cite{Rosenblatt1969}. The idea was to replace the numerator and the denominator of this ratio by Kernel estimators.  We refer to~\cite{hyndman1996} for a study of its asymptotic properties. 
 An alternative point of view is to consider the conditional estimation density problem as a non-parametric regression problem. This   has motivated the definition of local parametric estimators which have been asymptotically studied by~\cite{fan1996, hyndman2002, deGooijer2003}.
 Another approach was proposed by    \cite{faugeras2009}. 
  He showed asymptotic results for his copula based estimator under  smoothness assumptions on the  marginal density of~$Y$ and the copula function. 
  
  The aforementioned procedures depend on some parameters that should be tuned according to the (usually unknown) regularity of $s$. The practical choice  of these parameters is for instance discussed in~\cite{FanetYim2004} (see also the references therein). Nonetheless,  adaptive  estimation procedures are rather scarce in the literature.  
   We can cite the procedure of~\cite{efromovich2007}  which yields an oracle inequality for an integrated $\L^2$ loss. His estimator is  sharp minimax under  Sobolev type constraints.    \cite{bott2015adaptive}   adapted the    combinatorial method of~\cite{devroye1996universally} to the problem of   bandwidth selection in  kernel  conditional density estimation. They   showed that this method allows to select the bandwidth according to the regularity of~$s$  by proving an oracle inequality for  an integrated $\L^1$ loss.  The papers of~\cite{brunel2007, Akakpo2011} are based on the minimisation of a penalized $\L^2$ contrast inspired from the least squares. They established a model selection result for an empirical $\L^2$ loss and then  for an integrated $\L^2$ loss.  These  procedures build  adaptive estimators that may achieve the minimax rates   over  Besov classes.  The paper of  \cite{Chagny2013}  is based on projection estimators,     Goldenshluger and Lepski  methodology and a transformation of the data. She showed an oracle inequality for an integrated $\L^2$ loss from which she deduced that her  estimator is adaptive and reaches the expected rate of convergence under Sobolev  constraints on an auxiliary function. \cite{cohen2011}  gave model selection results for the penalized maximum likelihood estimator for a loss based on  a    Jensen-Kullback-Leibler divergence and under bracketing entropy type assumptions on the models. 

Another estimation procedure that can be found in the literature is the one of $T$-estimation ($T$ for test) developed by~\cite{BirgeTEstimateurs}. It  leads to much more general model selection theorems, which   allows the statistician to model finely the knowledge  he has on the target function   to obtain accurate estimates. 
It is shown in~\cite{Birge2012} that one can  build a $T$-estimator of the  conditional density. We now define the loss used in that paper to compare it with  ours. We suppose henceforth that the distribution of~$X_i$ is absolutely continuous with respect to a known $\sigma$-finite measure $\nu$. Let $f_X$ be its Radon-Nikodym derivative. We denote by   $\L^1_+ (A, \nu \otimes \mu)$   the cone of non-negative integrable functions on~$\XX \times \YY$ with respect to the product measure $\nu \otimes \mu$  vanishing outside~$A$.  \cite{Birge2012} measured the quality of his estimator by means of the Hellinger deterministic distance $\delta$     defined by
$$\delta^2(f,g) = \frac{1}{2} \int_{A} \left( \sqrt{f(x,y)} - \sqrt{g(x,y)} \right)^2 \d \nu  (x) \d \mu (y) \quad \text{for all $f,g \in \L^1_+ (A, \nu \otimes \mu)$.}$$
It is assumed in that paper that the marginal density $f_X$ of $X$ is bounded from below by a positive constant. 
This classical assumption seems natural in the sense that the estimation of~$s$ is better  in regions of high value of $f_X$ than regions of low value as stressed for instance in~\cite{bertin2013}.  In the present paper, we  bypass this assumption by  measuring the quality of our estimators  through the  Hellinger distance $h$ defined by  
$$h^2(f,g) = \frac{1}{2} \int_{A} \left( \sqrt{f(x,y)} - \sqrt{g(x,y)} \right)^2 f_{X} (x) \d \nu (x) \d \mu (y)  \quad \text{for all $f,g \in \L^1_+ (A, \nu \otimes \mu)$.}$$
The marginal density $f_X$ can even vanish,  in contrast to most of the  papers cited above. We  propose a new and  data-driven (penalized) criterion adapted to this  unknown loss.  Its definition is  in the line of the ideas developed in~\cite{BaraudMesure, SartMarkov, RhoEstimation}.   

The main result is an oracle type inequality  for   (at most) countable families of functions of  $ \L^1_+ (A, \nu \otimes \mu)$. This inequality holds true without additional assumptions on the statistical setting. We use it a first time as an alternative to resampling methods to select among  families of piecewise constant estimators.  We  deduce   an adaptive estimator that  achieves the expected rates of convergence over a range of (possibly inhomogeneous and anisotropic) Besov classes, including the ones of  small regularities. A second application of this inequality  leads to a new general model selection theorem under  very mild assumptions on the statistical setting. We  propose $3$ illustrations of this result. The first shows the existence of an adaptive estimator that  attains the expected rate of convergence (up to a logarithmic term) over a very wide range of (possibly inhomogeneous and anisotropic) Besov spaces. This estimator is  therefore able to cope in a satisfactory way with very smooth conditional densities as well as with very irregular ones.  The second illustration  deals with the celebrated regression model. It  shows  that the rates of convergence can be faster than the ones we would obtain under pure smoothness assumptions on $s$ when the data actually obey to a regression model (not necessarily Gaussian). The last illustration  concerns the case where the random variables $X_i$ lie in a high dimensional linear space, say $\XX  = \R^{d_1}$ with $d_1$ large. In this case, we    explain how our procedure can     circumvent  the curse of dimensionality.
 
 The paper is organized as follows. In Section~\ref{SectionSelectionEtHO}, we carry out  the estimation procedure and the oracle inequality. We use it to select among a family of piecewise constant estimators and study the quality of the selected estimator. Section~\ref{SectionModelSelection} is dedicated to the general model selection theorem and its applications.  The proofs are postponed to Section~\ref{SectionPreuves}.

We now introduce the notations that will be used all along the paper.  We set $\N^{\star} = \N \setminus \{0\}$, $\R_+ = [0,+\infty)$, $\R_+^{\star} = (0,+\infty)$. For $x,y \in \R$,  $x \wedge y$ (respectively $x \vee y$) stands for $\min(x,y)$  (respectively $\max (x,y)$). The positive part of a real number $x$ is denoted by $x_+ = x \vee 0$. The distance between a point $x$ and a set $A$ in a metric space $(E,d)$ is denoted by $d(x,A) = \inf_{y \in A} d (x,y)$. The cardinality of a finite set $A$ is denoted by $|A|$. The restriction of a function $f$ to a set $A$ is denoted by $\restriction{f}{A}$.  The indicator function of a set $A$ is denoted by~$\1_A$. The notations $c, C,c',C',c_1, C_1, c_2, C_2, \dots$  are for the constants. These constants may change from line to line.

 \section{Selection among  points and hold-out} \label{SectionSelectionEtHO}
Throughout the paper, $n > 3$ and $A$ is of the form $A = A_1 \times A_2$ with $A_1 \subset \XX$, $A_2 \subset \YY$.

\subsection{Selection rule and main theorem.} \label{SubSectionSelectionRulePoints}   Let  $\mathcal{L} (A,  \mu)$ be the subset of $\L^1_+ (A, \nu \otimes \mu)$ defined by
$$\mathcal{L} (A,  \mu) = \left\{f \in \L^1_+ (A, \nu \otimes \mu), \; \sup_{x \in A_1} \int_{A_2} f(x,y)  \d \mu(y) \leq 1 \right\},$$
and let $S$ be an at most countable subset of $\mathcal{L} (A,  \mu) $. The  aim of this section is to use the data $(X_i,Y_i)_{1 \leq i \leq n}$ in order to select a function $\hat{s} \in S$   close to the  unknown conditional density $s$.    We begin by presenting  the procedure. The underlying motivations will be  further discussed below.

Let $\bar{\Delta}$ be a map on $S$ satisfying 
$$\forall f \in S, \, \bar{\Delta}(f) \geq 1 \quad \text{and} \quad \sum_{f \in S} e^{- \bar{\Delta}(f)} \leq 1.$$
We define the function  $T$ on $S^2$ by
 \begin{eqnarray*}
T(f,f') &=& \frac{1}{n} \sum_{i=1}^n  \frac{\sqrt{f'(X_i,Y_i)} - \sqrt{f(X_i,Y_i)}}{\sqrt{f(X_i,Y_i) + f'(X_i,Y_i)}} \\
& & + \frac{1}{2 n} \sum_{i=1}^n \int_{A_2} \sqrt{f(X_i,y) + f'(X_i,y)} \left( \sqrt{f'(X_i,y)} - \sqrt{f(X_i,y)} \right) d \mu (y) \\
& & + \frac{1}{\sqrt{2} n} \sum_{i=1}^n \int_{A_2} \left(f(X_i,y) - f'(X_i,y) \right) d \mu (y),
\end{eqnarray*} 
where  the convention $0/0 = 0$ is used. We  set for $L > 0$,
$$\gamma (f) = \sup_{f' \in S}  \left\{T  (f,f') - L \frac{\bar{\Delta}(f')}{n} \right\}.$$
We finally define our estimator $\hat{s} \in S$ as any element of $S$ such that
\begin{eqnarray} \label{Criterion1}
\gamma(\hat{s}) + L \frac{\bar{\Delta}(\hat{s})}{n} \leq \inf_{f \in S} \left\{ \gamma(f)  + L \frac{\bar{\Delta}(f)}{n} \right\}+  \frac{1}{n}.
\end{eqnarray} 
  
\paragraph{\textbf{Remarks.}} The definition of $T$ comes from a decomposition of the Hellinger distance  initiated by~\cite{BaraudMesure} and taken back in~\cite{ Baraud2012, SartMarkov,Sart2012, RhoEstimation}.  
We shall  show in the proof of Theorem~\ref{theoremConditionalDensity} that for all  $f, f'  \in \mathcal{L} (A, \mu)$, $\xi > 0$, the two following assertions hold true  with probability larger than $1- e^{- n \xi}$: 
\begin{itemize}
\item If $T(f,f') \geq 0$, then $h^2(s,f') \leq c_1 h^2(s,f) + c_2 \xi$
\item  If $T(f,f') \leq  0$, then $h^2(s,f) \leq c_1 h^2(s,f') + c_2 \xi$.
\end{itemize}
In the above inequalities, $c_1$ and $c_2$ are positive universal constants. The sign of $T(f,f')$ allows thus to know which function among $f$ and $f'$ is the closest to $s$ (up to the multiplicative constant $c_1$ and the remainder term $c_2 \xi$). Note that comparing directly $h^2(s,f)$ to $h^2(s,f')$ is not straightforward in practice since $s$ and $h$ are both unknown to the statistician.  

 The definition of the criterion $\gamma$ looks like   the one proposed in Section~4.1 of~\cite{SartMarkov} for estimating the transition density of a Markov chain as well as the one proposed in~\cite{RhoEstimation}  for estimating one or several densities.     The underlying idea is that $\gamma(f) + L \bar{\Delta}(f)/n$ is roughly between $h^2(s,f)$ and $h^2(s,f) + L \bar{\Delta}(f)/n$. It is thus natural to minimize $\gamma(\cdot) + L \bar{\Delta}(\cdot)/n$ to define an estimator~{$\hat{s}$} of $s$. To be more precise, when $L$ is large enough,  the proof of Theorem~\ref{theoremConditionalDensity} shows that for all $\xi > 0$, the following chained inequalities hold true with probability larger than $1 - e^{- n \xi}$ uniformly for $f \in S$,
\begin{eqnarray*}
(1-\varepsilon) h^2(s,f)  - R_1 (\xi)  \leq \gamma (f) + L \frac{\bar{\Delta}(f)}{n} \leq (1+\varepsilon) h^2(s,f) + 2 L \frac{\bar{\Delta}(f)}{n} + R_2 (\xi)
\end{eqnarray*}
where
\begin{eqnarray*}
R_1 (\xi)  &=&  \inf_{f' \in S} \left\{ (1+\varepsilon) h^2(s,f') + L \frac{\bar{\Delta}(f')}{n} \right\} + c_3 \xi \\
R_2 (\xi) &=&  - (1- \varepsilon) h^2(s,S) + c_4 \xi 
\end{eqnarray*}
for  universal constants $c_3 > 0$, $c_4 > 0$, $\varepsilon \in (0,1)$.  We recall that $h^2(s,S)$ is the square of the Hellinger distance between the conditional density~$s$ and the set~$S$, $h^2(s,S) = \inf_{f \in S} h^2(s,f)$. Therefore, as $\hat{s}$ satisfies (\ref{Criterion1}),
 \begin{eqnarray*}
 (1-\varepsilon) h^2(s,\hat{s}) &\leq& \gamma (\hat{s}) + L \frac{\bar{\Delta}(\hat{s})}{n}  + R_1 (\xi) \\
 &\leq& \inf_{f \in S} \left\{ \gamma (f)  + L \frac{\bar{\Delta}(f)}{n} \right\} + 1/n  + R_1 (\xi) \\
 &\leq& \inf_{f \in S}  \left\{  (1+\varepsilon) h^2(s,f) + 2 L \frac{\bar{\Delta}(f)}{n} \right\} + 1/n + R_1(\xi) + R_2 (\xi).
 \end{eqnarray*}
Rewriting this last inequality and using that $\bar{\Delta} \geq 1$ yields:

\begin{thm} \label{theoremConditionalDensity} 
 There exists a universal constant $L_0$ such that if $L \geq L_0$, any estimator $\hat{s} \in S$ satisfying~{(\ref{Criterion1})} satisfies
 \begin{eqnarray} \label{eqProbabiltySCountable}
 \forall \xi > 0, \quad \P \left[   h^2(s,\hat{s} )  \leq  C_1 \inf_{f \in S} \left\{ h^2 (s,f) + L \frac{\bar{\Delta}(f)}{n}\right\}  + C_2\xi \right] \geq 1 - e^{-n \xi},
 \end{eqnarray} 
where $C_1, C_2$ are  universal positive constants. In particular,
$$ \E\left[  h^2(s,\hat{s} ) \right] \leq  C_3 \inf_{f \in S} \left\{ h^2 (s,f) + L \frac{\bar{\Delta}(f)}{n}\right\},  $$
where $C_3 > 0$ is universal.
\end{thm}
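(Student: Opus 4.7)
My plan follows the roadmap sketched in the remarks preceding the theorem. The core step is a pairwise Bernstein-type inequality for $T(f, f')$: I aim to show that for fixed $f, f' \in S$ and $\xi > 0$, with probability at least $1 - e^{-n\xi - \bar{\Delta}(f) - \bar{\Delta}(f')}$,
$$(1 - \varepsilon) h^2(s, f) - (1+\varepsilon) h^2(s, f') - L'\tfrac{\bar{\Delta}(f) + \bar{\Delta}(f')}{n} - c\xi \leq T(f, f') \leq (1+\varepsilon) h^2(s, f) - (1-\varepsilon) h^2(s, f') + L'\tfrac{\bar{\Delta}(f) + \bar{\Delta}(f')}{n} + c\xi,$$
for universal constants $\varepsilon \in (0,1)$, $L', c > 0$. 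To prove this bracketing, the first task is to identify $\E[T(f,f')]$: writing $g = \sqrt{f}$, $g' = \sqrt{f'}$, $\sigma = \sqrt{s}$ and using $\E[\phi(Y_i)\mid X_i] = \int \phi(y)\, s(X_i,y)\, d\mu(y)$ together with the identity $(g - \sigma)^2 - (g' - \sigma)^2 = (g'-g)(2\sigma - g - g')$, one sees that $\E[T(f, f')]$ differs from $h^2(s, f) - h^2(s, f')$ only by a quadratic slack absorbable into the $\varepsilon$-factor. Then $T(f,f') - \E[T(f,f')]$ is a sum of $n$ i.i.d.\ centred summands that are bounded (thanks to the normalising denominator $\sqrt{f(X_i,Y_i) + f'(X_i,Y_i)}$ in the first term combined with $f, f' \in \mathcal{L}(A,\mu)$) and whose conditional variance is controlled by $h^2(s,f) + h^2(s,f')$ plus a small constant. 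Bernstein's inequality then yields the bracketing.

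Second, I promote this into a uniform statement. The summability hypothesis $\sum_{f \in S} e^{-\bar{\Delta}(f)} \leq 1$ implies $\sum_{(f,f') \in S^2} e^{-\bar{\Delta}(f) - \bar{\Delta}(f')} \leq 1$, so a union bound produces an event of probability at least $1 - e^{-n\xi}$ on which the pairwise bracketing holds simultaneously for every pair. Lower-bounding $\gamma(f)$ by plugging a near-minimiser of $(1+\varepsilon) h^2(s, \cdot) + (L + L')\bar{\Delta}(\cdot)/n$ into the supremum, and upper-bounding $\gamma(f)$ by taking the supremum of $-h^2(s, f')$ inside $\gamma$ (which yields $-h^2(s,S)$), one obtains the chained inequality advertised in the remarks: uniformly in $f \in S$,
$$(1-\varepsilon)h^2(s, f) - R_1(\xi) \leq \gamma(f) + L\tfrac{\bar{\Delta}(f)}{n} \leq (1+\varepsilon) h^2(s, f) + 2L\tfrac{\bar{\Delta}(f)}{n} + R_2(\xi),$$
provided $L$ exceeds the universal constant $L'$ produced above; this fixes the threshold $L_0$.

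Third, applying the left-hand bound at $f = \hat{s}$ and the right-hand bound at arbitrary $f \in S$, and invoking the near-minimisation property (\ref{Criterion1}), collapses to
$$(1-\varepsilon) h^2(s, \hat{s}) \leq (1+\varepsilon) h^2(s, f) + 2L\tfrac{\bar{\Delta}(f)}{n} + R_1(\xi) + R_2(\xi) + \tfrac{1}{n}.$$
Taking the infimum over $f \in S$, using $\bar{\Delta} \geq 1$ to absorb the $1/n$ term, and rescaling $\xi$ yields (\ref{eqProbabiltySCountable}). The expectation bound is then standard: $\E[h^2(s, \hat{s})] = \int_0^{\infty} \P[h^2(s, \hat{s}) > t]\, dt$, splitting the integral at the oracle term and using the exponential tail decay in $\xi$.

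The main technical obstacle is the pairwise Bernstein step. One must simultaneously obtain an $L^\infty$ bound of order one and a variance bound of order $h^2(s,f)+h^2(s,f')$ plus an absolute constant on each summand of $T - \E[T]$, so that the deviation is absorbed by an arbitrarily small multiple of $h^2(s,f)+h^2(s,f')$ plus the terms $L'\bar{\Delta}/n$ and $c\xi$; any coarser control would inflate the oracle constant beyond what the union bound can tolerate. The precise algebraic form of $T$, with its three terms designed so that their combination cancels leading-order non-stochastic contributions while keeping each summand bounded, is engineered exactly to make this delicate balance work. Once that step is carried out, the union bound and the assembly into (\ref{eqProbabiltySCountable}) are essentially arithmetic.
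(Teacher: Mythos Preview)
Your proposal follows essentially the same route as the paper: a pairwise Bernstein deviation bound for $T(f,f')$ (the paper's Lemma~\ref{lemmePreuveTheoremConditionalDensity} and Claim~\ref{ClaimPreuveIneg}), a union bound over pairs via the weights $\bar\Delta$ (Lemma~\ref{lemmePreuveTheoremConditionalDensity2}), the chained inequality for $\gamma$ (Lemma~\ref{lemmePreuveTheoremConditionalDensity3}), and the final collapse at $\hat s$ exactly as in the remarks preceding the theorem.

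One technical point deserves a warning. You write that the variance of each summand is controlled by $h^2(s,f)+h^2(s,f')$ \emph{plus a small constant}. That additive constant must in fact be zero: any fixed $c_0>0$ in the variance survives Bernstein as a term of order $\sqrt{c_0\,\xi}$, which after AM--GM leaves an additive constant in the deviation bound that the union bound does not kill. Since the right-hand side of~(\ref{eqProbabiltySCountable}) can be as small as order $L/n$, such a constant cannot be absorbed and the argument would fail. The paper obtains the clean bound $\E[(T_{1,i}+T_{2,i})^2]\le 16\bigl(h^2(s,f)+h^2(s,f')\bigr)$ by splitting the summand into $T_{1,i}$ (the first term of $T$) and $T_{2,i}$ (the remaining integral terms), and controlling each piece separately using the pointwise inequalities $(\sqrt{f'}-\sqrt{f})^2\le 2(f+f')$ and $|\psi_2(x,y)|\le(\sqrt x+\sqrt y)/2$, together with $f,f'\in\mathcal L(A,\mu)$. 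Your plan is correct provided you carry out this step without the extra constant.
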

Note that the marginal density $f_X$ influences the performance of the estimator~$\hat{s}$ through the Hellinger loss $h$ only. Moreover, no information on $f_X$ is needed to build the estimator.

We can interpret the condition $\sum_{f \in S} e^{- \bar{\Delta}(f)} \leq 1$  as a (sub)-probability on~$S$. The more complex~$S$, the larger the weights  $\bar{\Delta}(f)$.  When $S$ is finite, one can choose  $\bar{\Delta}(f) = |\log S|$,  and the  above inequality becomes
$$ \P \left[ h^2(s,\hat{s} )  \leq  C_1 \left(h^2(s,S) + L \frac{|\log S|}{n} \right) + C_2 \xi \right] \geq 1 - e^{-n \xi}.$$
The  Hellinger quadratic risk of the estimator $\hat{s}$ can therefore be  bounded from above by a sum of two terms (up to a multiplicative constant): the first one stands for the bias term while the second one stands for the estimation term.

Let us mention that assuming that $S$ is a subset of $\mathcal{L} (A,  \mu) $ is not  restrictive. Indeed, if $f$ belongs to $\mathbb{L}_+^1 (A, \nu \otimes \mu) \setminus \mathcal{L} (A,  \mu) $, we can set
\begin{eqnarray*} 
\pi (f) (x,y) &=& \begin{cases} \frac{f (x,y)}{\int_{A_2} f (x,t) \d \mu(t)} &\mbox{if }  \int_{A_2} f (x,t) \d \mu(t) > 1 \; \text{and} \;  \int_{A_2} f (x,t) \d \mu(t)  < \infty \\ 
f (x,y)  & \mbox{if }  \int_{A_2} f (x,t) \d \mu(t) \leq 1 \\
0  &  \mbox{if }  \int_{A_2} f (x,t) \d \mu(t) = \infty.
\end{cases}  \\
\end{eqnarray*}
The  function $\pi(f)$  belongs to $ \mathcal{L} (A,  \mu) $ and does always better than $f$:
\begin{prop} \label{propRisqueDeStilde}
For all $f \in \L^1_+ (A, \nu \otimes \mu)$,   $$ h^2(s, \pi(f)) \leq  h^2(s, f).$$  
\end{prop}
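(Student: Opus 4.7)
My plan is to reduce the inequality to a pointwise-in-$x$ comparison and then exploit Cauchy--Schwarz together with the fact that $s(x,\cdot)$ integrates to at most one on $A_2$. Since the Hellinger distance $h^2(s,\cdot)$ is an $f_X$-weighted integral over $A_1$ of the quantity
$$F(x) := \int_{A_2} \bigl(\sqrt{s(x,y)} - \sqrt{f(x,y)}\bigr)^2 \d \mu(y),$$
and similarly $F_\pi(x)$ is defined with $\pi(f)$ in place of $f$, it suffices to show $F_\pi(x) \leq F(x)$ for every $x \in A_1$. I then fix $x$ and introduce $a := \int_{A_2} s(x,y)\d \mu(y) \in [0,1]$ (by definition of a conditional density), $c := \int_{A_2} f(x,y)\d \mu(y) \in [0,+\infty]$, and $\beta := \int_{A_2} \sqrt{s(x,y) f(x,y)}\d \mu(y)$, and distinguish three cases according to $c$.

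The case $c \leq 1$ is trivial since $\pi(f)(x,\cdot) = f(x,\cdot)$ by construction, so $F_\pi(x) = F(x)$. In the main case $1 < c < \infty$, both integrals are finite; expanding the squares and factoring out $\sqrt{c} - 1$ gives
$$F(x) - F_\pi(x) = (c - 1) - 2\beta\left(1 - \frac{1}{\sqrt{c}}\right) = (\sqrt{c} - 1)\left(\sqrt{c} + 1 - \frac{2\beta}{\sqrt{c}}\right).$$
The Cauchy--Schwarz inequality yields $\beta \leq \sqrt{a c} \leq \sqrt{c}$ (using $a \leq 1$), so the second factor is at least $\sqrt{c} + 1 - 2 = \sqrt{c} - 1 \geq 0$, and $F(x) - F_\pi(x) \geq (\sqrt{c} - 1)^2 \geq 0$.

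The remaining case $c = +\infty$ is slightly more delicate because the naive expansion of $F(x)$ may involve an indeterminate form. Here $\pi(f)(x,\cdot) = 0$, so $F_\pi(x) = a \leq 1$, and it suffices to prove $F(x) = +\infty$. I would split $A_2$ at the threshold $B := \{y \in A_2 : f(x,y) > 4 s(x,y)\}$: on $B$ one checks $(\sqrt{s} - \sqrt{f})^2 \geq f/4$, while on $A_2 \setminus B$ one has $\int f(x,y)\d \mu(y) \leq 4 a \leq 4$; since $\int_{A_2} f(x,y)\d \mu(y) = +\infty$, this forces $\int_B f(x,y)\d \mu(y) = +\infty$ and hence $F(x) = +\infty$. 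Integrating the pointwise bound $F_\pi(x) \leq F(x)$ against $f_X(x)\d \nu(x)$ then concludes.

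I expect no single step to be hard: the obstacle is mainly the bookkeeping of the case $c = +\infty$, where one must avoid writing $\infty - \infty$ when expanding $F(x)$; the truncation at $\{f > 4 s\}$ circumvents this cleanly. The factorisation with the Cauchy--Schwarz bound $\beta \leq \sqrt{c}$ in the main case is what makes the inequality tight and is the reason the assumption $a \leq 1$ is exactly what one needs.
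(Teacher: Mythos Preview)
Your argument is correct, but it follows a different path from the paper's.

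The paper observes that for $\nu$-almost every $x\in A_1$ one has $c(x):=\int_{A_2} f(x,y)\,\d\mu(y)<\infty$ (this is Fubini applied to $f\in\L^1_+(A,\nu\otimes\mu)$), so the case $c=\infty$ is a null set and needs no separate treatment. On this full-measure set, $\sqrt{\pi(f)(x,\cdot)}$ is exactly the metric projection of $\sqrt{f(x,\cdot)}$ onto the closed unit ball of $\L^2(A_2,\mu)$; since projections onto closed convex sets are $1$-Lipschitz and $\sqrt{s(x,\cdot)}$ already lies in that ball, the pointwise inequality $F_\pi(x)\le F(x)$ follows in one line. Your route is instead a direct expansion plus Cauchy--Schwarz, and you handle $c=\infty$ by hand via the truncation at $\{f>4s\}$.

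Both arguments are valid; the trade-off is that the paper's projection argument is shorter, more conceptual, and would apply verbatim to any other convex ``shrinkage'' of $f$, while your computation is self-contained (no appeal to the non-expansiveness of projections) and even yields the sharper quantitative bound $F(x)-F_\pi(x)\ge(\sqrt{c}-1)^2$ when $1<c<\infty$. Note incidentally that your handling of $c=\infty$ is correct but unnecessary once one invokes Fubini as the paper does.
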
 
Thereby, if $S$ is only assumed to  be a subset of  $\mathbb{L}_+^1 (A, \nu \otimes \mu)$,  the procedure applies with $S' = \left\{\pi(f), \, f \in S\right\} \subset \mathcal{L} (A, \mu)$  in place of $S$ (and with  $\bar{\Delta}(\pi(f)) = \bar{\Delta}(f)$). The resulting estimator $\hat{s} \in S'$  then satisfies (\ref{eqProbabiltySCountable}).

Remark:  the procedure does not depend on the dominating measure $\nu$. However, the set~$S$,  which must be chosen by the statistician,  must satisfy the above assumption $S \subset \mathbb{L}_+^1 (A, \nu \otimes \mu)$, which usually requires the knowledge of $\nu$. Actually, this assumption can be  slightly strengthened to deal with an unknown, but finite measure~$\nu$.  This may be of interest when   $\nu$  is   the (unknown) marginal distribution of~$X_i$ (in which case $f_X = 1$).
 More precisely, let    $\L^1_{+,sup} (A,  \mu)$  be  the set of  non-negative measurable functions vanishing outside  $A$ such that
$$\sup_{x \in A_1} \int_{A_2} f(x,y)  \d \mu(y) < \infty.$$
The assumption $S \subset \L^1_{+,sup} (A,  \mu)$  can be satisfied without knowing $\nu$ and implies   $S \subset \mathbb{L}_+^1 (A, \nu \otimes \mu)$.

\subsection{Hold-out.} \label{SectionHO}  As a first application of our oracle inequality, we consider the situation in which   the set  $S$ is a family of estimators built on a preliminary sample. We suppose therefore that we have at hand two independent samples of $Z = (X,Y)$: $\mathbf{Z}_1 = (Z_1,\dots, Z_n)$ and $\mathbf{Z}_2 = (Z_{n+1}, \dots, Z_{2 n})$. This is  equivalent  to  splitting an initial sample  $(Z_1,\dots, Z_{2 n})$ of size $2 n$ into two equal parts:  $\mathbf{Z}_1$ and   $\mathbf{Z}_2$.

Let  $\hat{S} = \{\hat{s}_{\lambda}, \, \lambda \in \Lambda\} \subset \L^1_+ (A, \nu \otimes \mu)$ be an at most countable collection of estimators  based only on the first sample $\mathbf{Z}_1$. 
In view of Proposition~\ref{propRisqueDeStilde}, we may assume, without loss of generality, that for all $\lambda \in \Lambda$, 
\begin{eqnarray*} 
\forall x \in A_1, \; \int_{A_2} \hat{s}_{\lambda} (x,y) \d \mu (y) \leq 1.
\end{eqnarray*} 
Let $\Delta \geq 1$ be a map defined on $\Lambda$  such that $\sum_{\lambda \in \Lambda} e^{-\Delta(\lambda)} \leq 1$.

Conditionally to $\mathbf{Z}_1$, $\hat{S}$ is a deterministic set. We can therefore apply our  selection  rule   to  $S = \hat{S}$, $\bar{\Delta}(\hat{s}_{\lambda}) = \Delta (\lambda)$ and to the sample $\mathbf{Z}_2$ to derive an estimator~$\hat{s}$ such that:
$$\forall \xi > 0, \quad \P \left[  h^2(s,\hat{s} )  \leq  C_1  \inf_{\lambda \in \Lambda} \left\{ h^2 (s, \hat{s}_{\lambda}) + L \frac{\Delta({\lambda})}{n}\right\}  + C_2 \xi  \Bigm|  \mathbf{Z}_1 \right] \geq 1 - e^{-n \xi }.$$
By taking the expectation with respect to $\mathbf{Z}_1$, we then deduce:
\begin{eqnarray} \label{eqDansHoldOut}
\forall \xi > 0, \quad \P \left[  h^2(s,\hat{s} )  \leq  C_1 \inf_{\lambda \in \Lambda} \left\{ h^2 (s, \hat{s}_{\lambda}) + L \frac{\Delta({\lambda})}{n}\right\}  + C_2 \xi  \right] \geq 1 - e^{-n \xi }.
\end{eqnarray} 
Note that there is almost no assumption on the preliminary estimators. It is only assumed that $\hat{s}_{\lambda} \in  \L^1_+ (A, \nu \otimes \mu)$. Besides, the non-negativity of $\hat{s}_{\lambda}$ can always be fixed by taking its  positive part  if needed. We may therefore select among Kernel estimators (to choose the bandwith for instance), local polynomial estimators, projection estimators\dots It is also possible to  mix in the collection  $\{\hat{s}_{\lambda}, \, \lambda \in \Lambda\}$ several type of estimators. From a numerical point of view, the procedure can be implemented in practice provided that $|\Lambda|$ is finite and not too large.

We shall illustrate this  result by applying it to some families of piecewise constant estimators.  As we shall see,  the resulting estimator $\hat{s}$  will be  optimal and adaptive  over some range of possibly anisotropic Hölder and possibly inhomogeneous Besov classes.

 \subsection{Histogram type estimators.} We now define the piecewise constant estimators.
Let $m$ be a (finite) partition of  $A \subset \XX \times \YY$,  and
$$\hat{s}_m (x,y)= \sum_{K \in m} \frac{\sum_{i=1}^n \1_{K} (X_i,Y_i)}{\sum_{i=1}^n \left(\delta_{X_i} \otimes \mu\right) (K)} \1_{K} (x,y),$$
where the conventions $0/0 = 0$, $x / \infty = 0$ are used. \cite{gyorfi2007} established an integrated $\L^1$ risk bound for $\hat{s}_m$ under Lipschitz conditions on $s$.   We are nevertheless unable to find in the literature a non-asymptotic risk bound for the Hellinger deterministic loss~$h$.  We propose the following result (which is assumption free on $s$):
 
 \begin{prop} \label{propRisqHelIntegreeHisto1}
 Let $m$ be a (finite) partition  of $A$ such that each $K \in m$ is of the form $I \times J$ with   $I \subset A_1$, $J \subset A_2$ and $\mu(J) < \infty$.  
 Let  $V_m$ be the cone of non-negative piecewise constant functions on the partition~$m$ defined by
 $$V_m = \left\{ \sum_{K \in m} a_K \1_K, \;  \forall K \in m, \, a_K \in [0, +\infty) \right\}.$$
 Then,
 \begin{eqnarray*}
  \E \left[h^2  (s , \hat{s}_m) \right]  \leq 4 h^2(s, V_m) + 4 \frac{|m|}{n}.
  \end{eqnarray*}
 \end{prop}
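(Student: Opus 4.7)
The plan is to approximate $\hat{s}_m$ by a well-chosen deterministic element $\bar{s}_m\in V_m$ and then decompose the risk into a bias part and a variance part via the elementary inequality $(u+v)^2\le 2u^2+2v^2$. For each cell $K=I\times J\in m$, setting $p_I=\int_I f_X\,\d\nu$ and $w_K=\int_K s\,f_X\,\d\nu\,\d\mu$, I would define
$$\bar{s}_K=\frac{w_K}{p_I\mu(J)}\ \text{if}\ p_I>0,\qquad \bar{s}_K=0\ \text{otherwise},\qquad \bar{s}_m=\sum_{K\in m}\bar{s}_K\1_K\in V_m.$$
Applying the triangle-type inequality pointwise with $u=\sqrt{s}-\sqrt{\bar{s}_m}$ and $v=\sqrt{\bar{s}_m}-\sqrt{\hat{s}_m}$ yields
$$h^2(s,\hat{s}_m)\le 2\,h^2(s,\bar{s}_m)+2\,h^2(\bar{s}_m,\hat{s}_m),$$
so it is enough to prove the deterministic bias bound $h^2(s,\bar{s}_m)\le 2\,h^2(s,V_m)$ and the stochastic variance bound $\E[h^2(\bar{s}_m,\hat{s}_m)]\le 2|m|/n$.

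For the bias, I would compute the cell-wise minimiser $a_K^\ast$ of $a\mapsto\int_K(\sqrt{s(x,y)}-\sqrt{a})^2 f_X\,\d\nu\,\d\mu$ over $a\ge 0$ and obtain $a_K^\ast=M_K^2/(p_I\mu(J))^2$ with $M_K=\int_K\sqrt{s}\,f_X\,\d\nu\,\d\mu$. Cauchy--Schwarz gives $a_K^\ast\le\bar{s}_K$, and expanding both cell-wise integrals in terms of $u=\sqrt{w_K}$ and $v=M_K/\sqrt{p_I\mu(J)}$ shows that the ratio $h^2(s,\bar{s}_K\1_K)|_K/h^2(s,a_K^\ast\1_K)|_K$ equals $2u/(u+v)\in[1,2]$. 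Summing over $K$ and using that $h^2(s,V_m)$ decomposes cell by cell into these minima gives $h^2(s,\bar{s}_m)\le 2\,h^2(s,V_m)$.

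For the variance, the decisive observation is a ``Binomial-in-Binomial'' structure: writing $N_I=\sum_i\1_I(X_i)$, $N_K=\sum_i\1_K(X_i,Y_i)$ and $q_K=w_K/p_I$ (when $p_I>0$), the i.i.d.\ nature of the $(X_i,Y_i)$ implies that, conditional on $\{N_I=k\}$, the $k$ observations with $X_i\in I$ are i.i.d.\ from the joint law of $(X,Y)$ given $X\in I$, under which $\P(Y\in J)=q_K$; hence $N_K\mid N_I=k\sim\mathrm{Bin}(k,q_K)$. On $\{N_I>0\}$ one computes $(\sqrt{\bar{s}_K}-\sqrt{\hat{s}_K})^2 p_I\mu(J)=p_I(\sqrt{N_K/N_I}-\sqrt{q_K})^2$; the inequality $(\sqrt{a}-\sqrt{b})^2\le(a-b)^2/b$ with $b=q_K$ together with $\mathrm{Var}(N_K/k\mid N_I=k)=q_K(1-q_K)/k$ yields the conditional bound $(1-q_K)/k$. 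On $\{N_I=0\}$ the contribution is $w_K(1-p_I)^n\le p_I(1-p_I)^n\le 1/(e(n+1))$. Using the Binomial identity $\E[1/(N_I+1)]=(1-(1-p_I)^{n+1})/((n+1)p_I)$ (and $1/k\le 2/(k+1)$ for $k\ge 1$) then gives a contribution of order $1/n$ per cell, whence $\E[h^2(\bar{s}_m,\hat{s}_m)]\le 2|m|/n$.

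The main obstacle is the variance step: the ratio definition $\hat{s}_K=N_K/(N_I\mu(J))$ forces one to treat $\{N_I=0\}$ separately and to control the negative moment $\E[1/N_I\,\1_{N_I>0}]$ of a Binomial, and it is the conditional structure $N_K\mid N_I\sim\mathrm{Bin}(N_I,q_K)$ that makes both the empty-cell term (via $w_K\le p_I$) and the cell-wise square-root approximation tractable.
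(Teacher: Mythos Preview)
Your argument is correct and reaches the same constants, but the variance step is organized differently from the paper. Both proofs use the same deterministic pivot $\bar s_m$ and the same bias inequality $h^2(s,\bar s_m)\le 2\,h^2(s,V_m)$; the paper simply quotes this from Lemma~2 of Baraud--Birg\'e, whereas your cell-wise computation with $u=\sqrt{w_K}$, $v=M_K/\sqrt{p_I\mu(J)}$ and the ratio $2u/(u+v)$ is a nice self-contained proof of that lemma.

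The real difference is in the stochastic part. You bound $h^2(\bar s_m,\hat s_m)$ directly, which forces you to analyse the random ratio $N_K/N_I$: you invoke the conditional law $N_K\mid N_I\sim\mathrm{Bin}(N_I,q_K)$, control the negative moment $\E[\1_{N_I>0}/N_I]$ via $\E[1/(N_I+1)]$, and treat $\{N_I=0\}$ separately. The paper instead inserts a second pivot
\[
s_m^\star=\sum_{K}\frac{N_K}{\E[N_I]\,\mu(J)}\1_K,
\]
with the \emph{random} numerator $N_K$ but the \emph{deterministic} denominator $\E[N_I]=np_I$, and writes $\E[h^2(s,\hat s_m)]\le 2h^2(s,\bar s_m)+4\E[h^2(\bar s_m,s_m^\star)]+4\E[h^2(s_m^\star,\hat s_m)]$. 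Each of the two stochastic pieces then reduces, via $(\sqrt a-\sqrt b)^2\le (a-b)^2/b$, to the crude bound $\var[\mathrm{Bin}(n,p)]\le np$, giving $|m|/(2n)$ for each; no conditioning, no negative moments, and no separate treatment of empty cells are needed. This is slicker and is also what makes the deviation bound of Proposition~\ref{propRisqHelIntegreeHisto2} immediate.

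One small correction: the inequality $p_I(1-p_I)^n\le 1/(e(n+1))$ fails for small $n$ (take $n=1$, $p_I=1/2$); the valid bound is $p(1-p)^n\le 1/(n+1)$, obtained at $p=1/(n+1)$, and this already suffices for your constant $2|m|/n$.
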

 This result shows that the Hellinger quadratic risk $h^2  (s,  \hat{s}_m) $ of the estimator $ \hat{s}_m$ can be bounded by a sum of two terms. The first one  $h^2(s, V_m)$ corresponds to a bias term whereas the second one $|m|/n$ corresponds to a variance or estimation term.    
A deviation bound  can also be established for some partitions:
 \begin{prop}  \label{propRisqHelIntegreeHisto2}
Assume that $m$ is a (finite) partition of $A$ of the form
$$m =  \left\{I \times J ,\, I \in \mathcal{I}, \, J \in \mathcal{J}_I \right\},$$ 
where $\mathcal{I}$ is a (finite) partition of $A_1$, and, for each $I \in \mathcal{I}$, $\mathcal{J}_I$ is a (finite) partition of $A_2$ such that $\mu (J) < \infty$ for all $J \in \mathcal{J}_I$. 

 Then, there exist  universal constants $C_1, C_2 > 0$ such that  for all $\xi > 0$, 
\begin{eqnarray*} 
\P \left[h^2  (s, \hat{s}_m)   \leq  4 h^2(s,V_m) + C_1 \frac{|m|}{n}  + C_2 \xi  \right] \geq 1- e^{-n \xi}.
\end{eqnarray*}
 \end{prop}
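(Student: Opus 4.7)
The plan is to combine the expectation bound of Proposition~\ref{propRisqHelIntegreeHisto1} with a concentration argument that exploits the product structure $m = \{I \times J : I \in \mathcal{I},\, J \in \mathcal{J}_I\}$. First I would introduce a benchmark $\bar{s}_m \in V_m$ that achieves (or nearly achieves) $h(s,V_m)$, use the triangle inequality to write $h^2(s,\hat{s}_m) \leq 2 h^2(s,V_m) + 2 h^2(\bar{s}_m,\hat{s}_m)$, and focus on the stochastic term $h^2(\bar{s}_m,\hat{s}_m)$. By Proposition~\ref{propRisqHelIntegreeHisto1} this stochastic term has expectation $O(|m|/n)$, so only its deviation remains to be controlled.

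Since $\bar{s}_m$ and $\hat{s}_m$ are both piecewise constant on the \emph{product} partition $m$, the stochastic term factors nicely:
$$h^2(\bar{s}_m,\hat{s}_m) = \frac{1}{2}\sum_{I \in \mathcal{I}} F_I \sum_{J \in \mathcal{J}_I} \mu(J)\bigl(\sqrt{\bar{a}_{I,J}} - \sqrt{\hat{a}_{I,J}}\bigr)^{\!2},$$
where $F_I = \P(X_1 \in I)$, $\hat{a}_{I,J} = N_{I\times J}/(\mu(J)N_I)$ and $\bar{a}_{I,J}$ are the coefficients of $\bar{s}_m$. The crucial gain from the product form is that, conditionally on the indices $\{i : X_i \in I\}$ (which fixes $N_I$), the inner sum over $J \in \mathcal{J}_I$ is the squared Hellinger distance between two histograms on $A_2$ based on an iid sample of size $N_I$ drawn from the mixed density $g_I(y) = F_I^{-1}\int_I s(x,y)f_X(x)\,d\nu(x)$.

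I would then concentrate each bin-wise contribution by a Bernstein-type (or Talagrand) inequality. Because $f \in \mathcal{L}(A,\mu)$ forces $\mu(J)\bar{a}_{I,J} \leq 1$ and $\mu(J)\hat{a}_{I,J} \leq 1$, the summands are uniformly bounded and a variance calculation gives, conditionally on $N_I$, a bound of the form $C_1 |\mathcal{J}_I|/N_I + C_2\xi_I/N_I$ with conditional probability $\geq 1 - e^{-\xi_I}$. A union bound over $I \in \mathcal{I}$ with $\xi_I$ chosen so that $\sum_I \xi_I$ contributes at most $n\xi$, followed by summation over $I$ using $\sum_I F_I/N_I \approx 1/n$ (in expectation), yields the desired deviation bound.

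The main obstacle is the handling of the bins where $N_I$ is small, in particular the event $\{N_I = 0\}$: there $\hat{s}_m$ vanishes on $I \times A_2$ by the $0/0 = 0$ convention, so the bin contributes $\frac{1}{2} F_I \sum_{J}\mu(J)\bar{a}_{I,J}$ to $h^2(\bar{s}_m,\hat{s}_m)$. These terms must be absorbed either into the bias $h^2(s,V_m)$ or into the $|m|/n$ and $\xi$ terms; the standard workaround is a preliminary Bernstein bound for $N_I$ around $nF_I$ that lets one restrict to an event on which $N_I \gtrsim nF_I$ whenever $nF_I$ is not too small, while on the complementary event the contribution of bin $I$ is itself $O(F_I)$ and can be summed to an $O(\xi)$ remainder.
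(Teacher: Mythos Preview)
Your route is plausible but unnecessarily hard, and the ``main obstacle'' you flag is exactly what the paper's argument avoids. The paper does not condition on the cells $\{i:X_i\in I\}$ at all. It takes $\bar s_m$ with coefficients $\E[N(I\times J)]/(\E[M(I)]\mu(J))$ (not the $h$-best approximant; these coefficients are precisely $\mu(J)^{-1}\P(Y\in J\mid X\in I)$, which is what makes the later algebra clean) and inserts a second intermediate function
\[
s_m^\star=\sum_{I\in\mathcal I}\sum_{J\in\mathcal J_I}\frac{N(I\times J)}{\E[M(I)]\,\mu(J)}\,\1_{I\times J},
\]
which has the empirical numerator but the \emph{deterministic} denominator. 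After the triangle inequality, the two stochastic terms collapse to
\[
h^2(\bar s_m,s_m^\star)=\frac{1}{2n}\sum_{I,J}\Bigl(\sqrt{\E N(I\times J)}-\sqrt{N(I\times J)}\Bigr)^{\!2},
\qquad
h^2(s_m^\star,\hat s_m)\le\frac{1}{2n}\sum_{I\in\mathcal I}\Bigl(\sqrt{\E M(I)}-\sqrt{M(I)}\Bigr)^{\!2},
\]
the second inequality using the product structure through $\sum_{J\in\mathcal J_I}N(I\times J)/M(I)\le 1$. Both right-hand sides are of the form $\sum_k(\sqrt{\E N_k}-\sqrt{N_k})^2$ for multinomial-type counts, and a single off-the-shelf deviation inequality (Theorem~8 of Baraud--Birg\'e) controls each of them by $C(|m|+x)$, resp.\ $C(|\mathcal I|+x)$, with probability $\ge 1-e^{-x}$. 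No conditioning, no random $N_I$ in a denominator, no small-count case analysis.

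By contrast, your conditioning strategy leaves the random $N_I^{-1}$ in place and forces you to stitch together a union bound with weights $\xi_I$ while simultaneously coping with bins where $N_I\ll nF_I$ or $N_I=0$; the ``standard workaround'' you mention can be made to work, but it is exactly what the $s_m^\star$ decoupling renders unnecessary. Note also that choosing $\bar s_m$ as the $h$-minimizer, as you propose, gives coefficients $\bar a_{I,J}$ that are averages of $\sqrt{s}$ rather than $\P(Y\in J\mid X\in I)/\mu(J)$, so conditionally on $N_I$ your inner sum is \emph{not} the Hellinger distance between the histogram estimator and the true $g_I$; you would have to switch to the paper's $\bar s_m$ anyway for the conditional argument to go through as written.
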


 \subsection{Selecting among piecewise constant estimators by Hold-out.} The risk of a histogram type estimator $\hat{s}_m$ depends on the choice of the partition $m$: the thinner $m$, the smaller the bias term $h^2(s, V_m)$ but the larger the variance term $|m|/n$. Choosing a good partition $m$, that is a partition that realizes a good trade-off between the bias and variance terms  is difficult  in practice since  $h^2(s, V_m)$ is unknown  (as it involves the unknown conditional density $s$ and the unknown distance $h$). Nevertheless, combining (\ref{eqDansHoldOut}) and  Proposition~\ref{propRisqHelIntegreeHisto1} immediately entails the following corollary.
 
\begin{cor} \label{corrolarySelectionHisto} 
Let $\mathcal{M}$ be an at most countable collection of finite partitions $m$ of $A$.
Assume that each $K \in m$ is of the form $I \times J$ with   $I \subset A_1$, $J \subset A_2$ and $\mu(J) < \infty$.
Let $\Delta \geq 1$ be a  map on~$\mathcal{M}$ satisfying $$\sum_{m \in \mathcal{M}} e^{- \Delta(m)} \leq 1.$$
Then, there exists an estimator $\hat{s}$ such that
\begin{eqnarray} \label{eqInegaliteOracleConstant}
\E \left[ h^2 \left(s, \hat{s} \right)  \right] \leq  C \inf_{m \in  \mathcal{M}} \left\{   h^2 \left(s, V_m \right)   +  \frac{|m| + \Delta(m)}{n}   \right\},
\end{eqnarray}
where $C$ is a universal positive constant.
\end{cor}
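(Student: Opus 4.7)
The plan is to assemble the corollary directly from the two ingredients cited in the text: the hold-out oracle inequality (\ref{eqDansHoldOut}) and the risk bound for histogram estimators in Proposition~\ref{propRisqHelIntegreeHisto1}. Assume a sample of size $2n$ is available, split into two independent subsamples $\mathbf{Z}_1$ and $\mathbf{Z}_2$ each of size $n$. On $\mathbf{Z}_1$ I would build, for each partition $m \in \mathcal{M}$, the histogram estimator $\hat{s}_m$ defined in Section~2.3. Before feeding this family to the hold-out machinery I need $\hat{s}_m \in \mathcal{L}(A,\mu)$; this is a short direct check using the rectangular form $K = I \times J$ of the cells (the integral $\int_{A_2} \hat{s}_m(x,y) \d\mu(y)$ reduces to a sum of ratios $N_K / N_I$ over disjoint $J$'s that telescope to at most $1$), and if preferred one can simply compose with the projection $\pi$ from Proposition~\ref{propRisqueDeStilde}, which only improves the risk.

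Next, apply~(\ref{eqDansHoldOut}) conditionally on $\mathbf{Z}_1$, with $\Lambda = \mathcal{M}$, $\hat{s}_\lambda = \hat{s}_m$ and the given weights $\Delta(m)$ (which satisfy the Kraft-type condition by hypothesis). After taking expectation with respect to $\mathbf{Z}_1$ this produces an estimator $\hat{s}$ such that, for every $\xi > 0$,
$$\P\left[h^2(s,\hat{s}) \leq C_1 \inf_{m \in \mathcal{M}}\left\{h^2(s, \hat{s}_m) + L\frac{\Delta(m)}{n}\right\} + C_2 \xi\right] \geq 1 - e^{-n\xi}.$$
Integrating this exponential tail in $\xi$ in the standard way yields
$$\E[h^2(s,\hat{s})] \leq C_1 \,\E\!\left[\inf_{m \in \mathcal{M}}\left\{h^2(s,\hat{s}_m) + L\frac{\Delta(m)}{n}\right\}\right] + \frac{C_2'}{n}.$$

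To finish, I would upper-bound the infimum inside the expectation by evaluating it at any fixed $m$, swap the expectation with the infimum over $\mathcal{M}$, and invoke Proposition~\ref{propRisqHelIntegreeHisto1} to replace $\E[h^2(s, \hat{s}_m)]$ by $4 h^2(s, V_m) + 4|m|/n$. This gives
$$\E[h^2(s,\hat{s})] \leq C \inf_{m \in \mathcal{M}}\left\{h^2(s, V_m) + \frac{|m| + \Delta(m)}{n}\right\} + \frac{C_2'}{n},$$
and the residual $1/n$ is absorbed into the infimum since $|m| \geq 1$ for every partition. Honestly there is no real obstacle here — the only point requiring any care is the verification that the histograms lie in $\mathcal{L}(A,\mu)$, which is why the rectangular structure $K = I \times J$ is assumed in the statement.
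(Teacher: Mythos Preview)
Your proof is correct and follows exactly the approach the paper indicates: combine the hold-out oracle inequality~(\ref{eqDansHoldOut}) with the histogram risk bound of Proposition~\ref{propRisqHelIntegreeHisto1}, then integrate the tail and swap the infimum with the expectation.

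One small caveat: your direct ``telescoping'' check that $\int_{A_2}\hat s_m(x,y)\,\d\mu(y)\leq 1$ is not valid under the general hypothesis of the corollary. The cells $K=I\times J$ are only assumed to be rectangles individually, so for a fixed $x$ the sets $I_K$ with $x\in I_K$ need not coincide, and the sum $\sum_{K:\,x\in I_K} N(K)/M(I_K)$ can exceed~$1$ (it does telescope when the partition has the product structure of Proposition~\ref{propRisqHelIntegreeHisto2}, where the $I$'s themselves form a partition of~$A_1$). Your fallback via the projection~$\pi$ of Proposition~\ref{propRisqueDeStilde} is exactly the right fix, and is precisely how the paper handles this point in Section~\ref{SectionHO}.
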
 
The novelty of this oracle inequality lies in the fact that it holds for an (unknown) deterministic Hellinger loss under  very mild assumptions both on the partitions and the statistical setting. We avoid   some classical assumptions that are required in the literature to prove similar inequalities (see, for instance, Theorem~3.1 of~\cite{Akakpo2011} for a result with respect to a $\L^2$ loss).

\subsection{Minimax rates over Hölder and Besov spaces.} \label{SectionMinimaxHolderBesov} We can now deduce from (\ref{eqInegaliteOracleConstant})  estimators with nice statistical properties under smoothness assumptions on the conditional density.    Throughout this section,  $\XX \times \YY = \R^d$, $A = [0,1]^d$ and  $\mu$ is the Lebesgue measure.

\subsubsection{Hölder spaces.} \label{SectionDefinitionHolderSpace} Given $\alpha \in (0,1]$, we recall that the  Hölder space $\mathcal{H}^{\alpha} ([0,1])$  is the set of  functions $f$  on $[0,1]$ for which there exists $|f|_{\alpha} > 0$ such that 
\begin{eqnarray} \label{conditionFHolder}
|f(x) - f(y) | \leq |f|_{\alpha}|x - y|^{\alpha} \quad \text{for all $x,y \in [0,1]$.}
\end{eqnarray} 
 Given $\boldsymbol{\alpha} = (\alpha_1, \dots, \alpha_d) \in (0,1]^d$,   the Hölder space $\mathcal{H}^{\boldsymbol{\alpha}} ([0,1]^d)$  is the set of   functions~$f$  on $[0,1]^d$ such that   for all $(x_1,\dots,x_d) \in (0,1]^d$,  $j \in \{1,\dots, d\}$, 
 $$f_j (\cdot) = f(x_1, \cdots, x_{j-1}, \cdot , x_{j+1}, \dots x_{d}) $$
satisfies (\ref{conditionFHolder}) with some constant $|f_j|_{\alpha_j}$ independent of  $x_1,\dots,x_{j-1},x_{j+1},\dots, x_d$. 
We then set $|f|_{\boldsymbol{\alpha}} = \max_{1 \leq j \leq d} |f_j|_{\alpha_j}$.
When all the $\alpha_j$ are equals, the Hölder space $\mathcal{H}^{\boldsymbol{\alpha}} ([0,1]^d)$ is said to be isotropic and anisotropic otherwise.  

Choosing suitably the collection $\mathcal{M}$ of partitions allows to bound from above the right-hand side of (\ref{eqInegaliteOracleConstant}) when  $\restriction{\sqrt{s}}{[0,1]^d}$ is Hölderian. More precisely,  for each integer $N \in \N^{\star}$, let  $m_N$ be the regular partition of $[0,1]$ with $N$ pieces
$$m_{{N}} = \left\{\left[0, {1}/{N}\right[, \left[{1}/{N}, {2}/{N}\right[, \dots,  \left[{(N-1)}/{N}, 1\right] \right\}.$$
 We may define for each multi-integer  $\textbf{N} = (N_1, \dots, N_d) \in (\N^{\star})^d$,
$$m_{\textbf{N}} = \left\{\prod_{j=1}^d I_j, \quad \forall j \in \{1,\dots, d\}, \;  I_j \in m_{N_j} \right\}.$$
 We now choose $\mathcal{M} = \left\{m_{\textbf{N}}, \, \textbf{N} \in (\N^{\star})^d \right\}$, $\Delta(m_{\textbf{N}}) = |m_{\textbf{N}}|$ to deduce  (see, for instance,     Lemma~4 and Corollary~2 of~\cite{BirgePoisson} among numerous other references):
\begin{cor}  
 There exists an estimator $\hat{s}$  such that  for all $ {\boldsymbol{\alpha}} \in (0,1]^d $ and $\restriction{\sqrt{s}}{[0,1]^d} \in \mathcal{H}^{\boldsymbol{\alpha}} ([0,1]^ {d})$, 
\begin{eqnarray*}
\E \left[h^2 \left(s, \hat{s} \right) \right] \leq C \left[ \left|\restriction{\sqrt{s}}{[0,1]^d}\right|_{ {\boldsymbol{\alpha}}}^{\frac{2 d}{d +  2  \bar{\boldsymbol{\alpha}}}} n^{-\frac{2  \bar{\boldsymbol{\alpha}}}{2  \bar{\boldsymbol{\alpha}} + d}}   + n^{-1} \right],
\end{eqnarray*}
where
$ \bar{\boldsymbol{\alpha}}$ stands for the harmonic mean of $ {\boldsymbol{\alpha}}$
$$\frac{1}{ \bar{\boldsymbol{\alpha}}} = \frac{1}{d} \sum_{i=1}^d \frac{1}{\alpha_i},$$
and where $C$ is a positive constant depending only on $d$.
\end{cor}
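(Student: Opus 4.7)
The plan is to apply Corollary~\ref{corrolarySelectionHisto} to the collection $\mathcal{M} = \{m_{\mathbf{N}} : \mathbf{N} \in (\N^{\star})^d\}$ with a weight $\Delta$ of order $|m_{\mathbf{N}}|$. Since $\prod_{j=1}^d N_j \geq \sum_{j=1}^d N_j - (d-1)$ for integers $N_j \geq 1$, a multiple $\Delta(m_{\mathbf{N}}) = c_d |m_{\mathbf{N}}|$ with $c_d$ depending only on $d$ satisfies $\sum_{\mathbf{N}} e^{-\Delta(m_{\mathbf{N}})} \leq 1$. The oracle inequality~(\ref{eqInegaliteOracleConstant}) then reduces the problem to bounding the bias $h^2(s, V_{m_{\mathbf{N}}})$ and optimizing the bias--variance trade-off over $\mathbf{N}$.

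For the bias, on each cell $K = \prod_j I_j \in m_{\mathbf{N}}$ (with $|I_j| = 1/N_j$), I would approximate $\sqrt{s}$ by the non-negative constant $a_K = \sqrt{s}(z_K)$ evaluated at an arbitrary point $z_K \in K$; this yields $\sqrt{g} = \sum_K a_K \1_K$ with $g \in V_{m_{\mathbf{N}}}$. The anisotropic Hölder assumption on $\restriction{\sqrt{s}}{[0,1]^d}$ combined with the triangle inequality (applied one coordinate at a time) gives, for every $(x,y) \in A$,
\begin{equation*}
\bigl(\sqrt{s}(x,y) - \sqrt{g}(x,y)\bigr)^2 \leq d \, |\sqrt{s}|_{\boldsymbol{\alpha}}^2 \sum_{j=1}^d N_j^{-2\alpha_j}.
\end{equation*}
Integrating against $f_X(x)\, \d\nu(x)\, \d\mu(y)$ and using $\int_A f_X(x)\, \d\nu(x)\, \d\mu(y) \leq \P(X \in A_1) \mu(A_2) \leq 1$ (since $A \subset [0,1]^d$) yields
\begin{equation*}
h^2(s, V_{m_{\mathbf{N}}}) \leq \frac{d}{2}\, |\sqrt{s}|_{\boldsymbol{\alpha}}^2 \sum_{j=1}^d N_j^{-2\alpha_j}.
\end{equation*}
The crucial feature is that this bound is free of $\|f_X\|_{\infty}$: the weight $f_X$ built into $h$ is absorbed by the fact that the marginal of $X$ is a probability measure.

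With this estimate in hand, the optimization is standard. I would choose $N_j = \lceil M^{1/(2\alpha_j)} \rceil$ for a common parameter $M \geq 1$, so that $\prod_j N_j \leq 2^d M^{d/(2\bar{\boldsymbol{\alpha}})}$ and $\sum_j N_j^{-2\alpha_j} \leq d/M$. Minimizing $|\sqrt{s}|_{\boldsymbol{\alpha}}^2/M + M^{d/(2\bar{\boldsymbol{\alpha}})}/n$ over $M \geq 1$ gives, whenever the unconstrained optimizer exceeds $1$, $M \sim (n|\sqrt{s}|_{\boldsymbol{\alpha}}^2)^{2\bar{\boldsymbol{\alpha}}/(d+2\bar{\boldsymbol{\alpha}})}$; substituting back produces the target rate $|\sqrt{s}|_{\boldsymbol{\alpha}}^{2d/(d+2\bar{\boldsymbol{\alpha}})}\, n^{-2\bar{\boldsymbol{\alpha}}/(2\bar{\boldsymbol{\alpha}}+d)}$ for bias and variance simultaneously.

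In the degenerate regime where the optimal real-valued $M$ is smaller than $1$ (i.e., $n|\sqrt{s}|_{\boldsymbol{\alpha}}^2$ is too small for a non-trivial partition to be useful), I would simply take $\mathbf{N} = (1, \dots, 1)$: the variance is then $1/n$ and the bias is bounded by $(d^2/2)|\sqrt{s}|_{\boldsymbol{\alpha}}^2 = O(1/n)$ in this regime, giving rise to the additional $n^{-1}$ term in the statement. The only mildly delicate point in the whole argument is keeping the bias estimate free of $\|f_X\|_{\infty}$, which is exactly the payoff of using the $f_X$-weighted Hellinger distance $h$ and is what makes the final constant depend only on $d$; the rest is routine bias--variance optimization, with the reassuring property that the estimator delivered by Corollary~\ref{corrolarySelectionHisto} does not depend on $\boldsymbol{\alpha}$ and is therefore automatically adaptive over the whole family.
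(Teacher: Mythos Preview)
Your argument is correct and follows the same route as the paper: apply Corollary~\ref{corrolarySelectionHisto} with the regular product partitions $\mathcal{M}=\{m_{\mathbf{N}}\}$ and $\Delta(m_{\mathbf{N}})$ proportional to $|m_{\mathbf{N}}|$, then bound the bias via the anisotropic H\"older condition and balance against the variance. The paper simply defers the approximation and optimisation steps to Lemma~4 and Corollary~2 of~\cite{BirgePoisson}, whereas you carry them out explicitly; your extra care in inserting a dimension-dependent constant $c_d$ in $\Delta$ to guarantee summability, and in noting that the bias bound is free of $\|f_X\|_\infty$ because $\int_A f_X\,\d\nu\,\d\mu\le 1$, are exactly the points one needs and match the paper's intent.
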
  
The estimator $\hat{s}$ achieves therefore the optimal rate of convergence over the anisotropic Hölder classes $\mathcal{H}^{\boldsymbol{\alpha}} ([0,1]^ {d})$,  $\boldsymbol{\alpha} \in (0,1]^d $. It is moreover adaptive since  its construction does not involve the smoothness parameter $\boldsymbol{\alpha} $.

\subsubsection{Besov spaces.}  The preceding result may be generalized to the Besov classes under a mild assumption on the design density.

We refer to Section~2.3 of~\cite{Akakpo2012} for a precise definition of the Besov  spaces. According to  the  notations developed in this paper,     $\mathcal{B}_{q}^{\boldsymbol{\alpha}} (\L^p ([0,1]^d))$  stands for the Besov space with parameters  $p > 0$, $q > 0$, and smoothness index $\boldsymbol{\alpha} \in (\R_+^{\star})^d$. We denote  its  semi norm by $|\cdot|_{\boldsymbol{\alpha},p, q}$.  
 This space is said to be homogeneous when $p \geq 2$ and inhomogeneous otherwise. It is said to be isotropic when all the $\alpha_j$ are equals and anisotropic otherwise. We now set for $p \in (0,+\infty]$,
\begin{equation*}
 \mathscr{B}^{\boldsymbol{\alpha}} (\L^p ([0,1]^{d}))  =
  \begin{cases} 
  \mathscr{B}^{\boldsymbol{\alpha}}_{\infty} (\L^p ([0,1]^{d})) & \text{if $p \in (0,1]$} \\
   \mathscr{B}^{\boldsymbol{\alpha}}_{p} (\L^p ([0,1]^{d}))  & \text{if $p \in (1,2)$} \\
  \mathscr{B}^{\boldsymbol{\alpha}}_{\infty} (\L^p ([0,1]^{d})) & \text{if $p \in [2,+\infty)$} \\   
   \mathcal{H}^{\boldsymbol{\alpha}} ([0,1]^{d})  & \text{if $p = \infty$} 
  \end{cases}
\end{equation*}
and denote by $|\cdot|_{\boldsymbol{\alpha},p}$ the semi norm associated to the space $\mathscr{B}^{\boldsymbol{\alpha}} (\L^p ([0,1]^{ d}))$. 
 
 The algorithm of~\cite{Akakpo2012} provides a collection $\M$ of partitions $m$  that allows to bound  the right-hand side of (\ref{eqInegaliteOracleConstant}) from above when  $\restriction{\sqrt{s}}{[0,1]^d}$ belongs to a Besov space. More precisely:
 
 \begin{cor}  \label{CorVitesseBesov1}
 Suppose that the  (possibly unknown) density $f_X$   of $X_i$ is upper bounded by a  (possibly unknown) constant $\kappa$ and that $\nu $ is the Lebesgue measure.
 
 Then, there exists an estimator $\hat{s}$ such that,  for all $p \in (2 d / (d+2), +\infty]$, $\boldsymbol{\alpha} \in (0,1)^d$, $\bar{\boldsymbol{\alpha}} > d (1/p-1/2)_+ $ and $\restriction{\sqrt{s}}{[0,1]^d} \in \mathscr{B}^{\boldsymbol{\alpha}} (\L^p ([0,1]^{d}))$, \begin{eqnarray} \label{EqVitesseHisto1}
\E \left[h^2 \left(s , \hat{s} \right) \right] \leq C \left[ \left|\restriction{\sqrt{s}}{[0,1]^d}\right|_{\boldsymbol{\alpha},p}^{\frac{2 d}{d +  2 \bar{\boldsymbol{\alpha}}}} n^{-\frac{2 \bar{\boldsymbol{\alpha}}}{  2 \bar{\boldsymbol{\alpha}} + d}}   +  n^{-1} \right],
\end{eqnarray}
where $C > 0$ depends only on $\kappa$,$d$,$\boldsymbol{\alpha}$,$p$ and where $ \bar{\boldsymbol{\alpha}}$ denotes the harmonic mean of $\boldsymbol{\alpha}$.
\end{cor}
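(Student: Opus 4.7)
My plan is to deduce this corollary by applying the oracle inequality \eqref{eqInegaliteOracleConstant} to the nested family of dyadic partitions constructed in \cite{Akakpo2012}, combining it with the approximation-theoretic estimates for piecewise constants on such partitions.

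First I would instantiate Corollary~\ref{corrolarySelectionHisto} with the following collection. For each $D \in \N^\star$ let $\M_D$ be the set of dyadic partitions of $[0,1]^d$ of the Akakpo type with $|m| = D$; it is known (see Section~3 of \cite{Akakpo2012}) that $|\M_D| \leq e^{c_1 D}$ for a constant $c_1$ depending only on $d$. Setting $\M = \bigcup_{D \geq 1} \M_D$ and $\Delta(m) = (c_1 + 1) |m|$, the weight condition $\sum_{m \in \M} e^{-\Delta(m)} \leq 1$ is satisfied and $\Delta(m) \geq 1$. The resulting estimator $\hat s$ therefore satisfies
\begin{eqnarray*}
\E\left[h^2(s, \hat s)\right] \leq C \inf_{m \in \M} \left\{ h^2(s, V_m) + \frac{|m|}{n}\right\}
\end{eqnarray*}
with $C$ depending only on $d$.

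Next I would control the bias $h^2(s, V_m)$ under the Besov assumption on $\sqrt{s}$. Using the hypothesis $f_X \leq \kappa$ and $\nu$ being Lebesgue, and replacing any piecewise-constant approximant $\sigma$ of $\sqrt{s}$ by $\sigma_+$ (which belongs, after squaring its values, to $V_m$ and can only decrease the $L^2$ error to $\sqrt{s} \geq 0$), one has
\begin{eqnarray*}
h^2(s, V_m) \leq \frac{\kappa}{2} \inf_{\sigma}\left\| \restriction{\sqrt{s}}{[0,1]^d} - \sigma\right\|_{L^2([0,1]^d)}^2,
\end{eqnarray*}
where the infimum is over piecewise constants on the partition $m$. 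Then I invoke the approximation result of \cite{Akakpo2012} (valid in the stated range $p > 2d/(d+2)$ and $\bar{\boldsymbol{\alpha}} > d(1/p - 1/2)_+$): for every $D \in \N^\star$ there exists $m_D \in \M_D$ such that the piecewise-constant $L^2$-approximation error of $\restriction{\sqrt{s}}{[0,1]^d}$ on $m_D$ is bounded by $C_2 |\restriction{\sqrt{s}}{[0,1]^d}|_{\boldsymbol{\alpha}, p}^2 \, D^{-2\bar{\boldsymbol{\alpha}}/d}$, with $C_2$ depending only on $d, \boldsymbol{\alpha}, p$.

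Plugging this into the oracle inequality and restricting the infimum to $\{m_D : D \in \N^\star\}$ gives
\begin{eqnarray*}
\E\left[h^2(s, \hat s)\right] \leq C_3 \inf_{D \in \N^\star}\left\{ \left|\restriction{\sqrt{s}}{[0,1]^d}\right|_{\boldsymbol{\alpha}, p}^2 D^{-2\bar{\boldsymbol{\alpha}}/d} + \frac{D}{n} \right\}.
\end{eqnarray*}
Balancing the two terms with $D^\star \asymp \big( n |\restriction{\sqrt{s}}{[0,1]^d}|_{\boldsymbol{\alpha}, p}^2 \big)^{d/(d+2\bar{\boldsymbol{\alpha}})}$ yields the rate $|\restriction{\sqrt{s}}{[0,1]^d}|_{\boldsymbol{\alpha}, p}^{2d/(d+2\bar{\boldsymbol{\alpha}})} n^{-2\bar{\boldsymbol{\alpha}}/(2\bar{\boldsymbol{\alpha}} + d)}$ whenever $D^\star \geq 1$, while in the opposite regime the term $D/n$ taken at $D = 1$ provides the additive $n^{-1}$. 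This proves \eqref{EqVitesseHisto1}.

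The main obstacle is essentially bookkeeping: verifying that Akakpo's approximation result transfers cleanly to approximation by non-negative piecewise constants (handled by the $\sigma \mapsto \sigma_+$ truncation), and that the weight constraint $\sum_m e^{-\Delta(m)} \leq 1$ is compatible with the cardinality of $\M_D$ (handled by choosing $\Delta(m) = (c_1 + 1)|m|$). Everything else reduces to the oracle inequality of Corollary~\ref{corrolarySelectionHisto} and a standard bias/variance trade-off.
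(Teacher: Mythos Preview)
Your proof is correct and follows the same route the paper indicates: applying Corollary~\ref{corrolarySelectionHisto} to the Akakpo dyadic partition collection, bounding $h^2(s,V_m)$ by $\tfrac{\kappa}{2}\,d_2^2(\sqrt{s},V_m)$ via the assumption $f_X\le\kappa$, and then invoking the piecewise-constant Besov approximation rate from \cite{Akakpo2012} before balancing. The paper leaves these details implicit, but your handling of the weight condition and of the positive-part truncation $\sigma\mapsto\sigma_+$ is exactly the right way to make them explicit.
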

Remark: the control of the bias term   $h(s,V_m)$ in (\ref{eqInegaliteOracleConstant}) naturally involves a smoothness assumption on the square root of $s$ instead of $s$. However, the regularity of the square root of $s$ may be deduced from that of $s$.  Indeed, we can prove that if $s \in \mathscr{B}^{\boldsymbol{\alpha}}_{q} (\L^p ([0,1]^{d}))$ with $\boldsymbol{\alpha} \in (0,1)^d$  then $\sqrt{s} \in \mathscr{B}^{\boldsymbol{\alpha}/2}_{2 q} (\L^{2 p} ([0,1]^{d}))$ and $|\sqrt{s}|_{\boldsymbol{\alpha}/2,2 p,2 q} \leq \sqrt{|s|_{\boldsymbol{\alpha},p,q}}$.  If, additionally,  $s$ is positive on $[0,1]^d$,  then $\sqrt{s}$ also belongs to $\mathscr{B}^{\boldsymbol{\alpha}}_{q} (\L^p ([0,1]^{d}))$ and $$|\sqrt{s}|_{\boldsymbol{\alpha}, p,q} \leq \frac{|s|_{\boldsymbol{\alpha},p,q}}{2 \sqrt{\inf_{x \in [0,1]^d} s(x)}}.$$
Under the assumption of Corollary~\ref{CorVitesseBesov1}, we deduce  that if  $s \in \mathscr{B}_{\infty}^{\boldsymbol{\alpha}} (\L^p ([0,1]^{d}))$ for some   $p \in (2 d / (d+2), +\infty]$, $\boldsymbol{\alpha} \in (0,1)^d$, $\bar{\boldsymbol{\alpha}} > d (1/p-1/2)_+ $,  
\begin{eqnarray*}
  \E \left[h^2 (s, \hat{s}) \right] \leq  C \min \left\{ \left(\frac{\left|\restriction{{s}}{[0,1]^d}\right|_{\boldsymbol{\alpha},p,\infty}}{\left(\inf_{x \in [0,1]^d} s(x)\right)^{1/2}}\right)^{\frac{2 d}{d +  2 \bar{\boldsymbol{\alpha}}}} n^{-\frac{2 \bar{\boldsymbol{\alpha}}}{  2 \bar{\boldsymbol{\alpha}} + d}}  ,    \left|\restriction{{s}}{[0,1]^d}\right|_{\boldsymbol{\alpha}, p, \infty}^{\frac{d}{d+\bar{\boldsymbol{\alpha}}}} n^{-\frac{\bar{\boldsymbol{\alpha}}}{\bar{\boldsymbol{\alpha}} + d}}, n^{-1}  \right\},
\end{eqnarray*}
where $C > 0$ depends only on $\kappa$,$d$,$\boldsymbol{\alpha}$,$p$.

\section{Model selection } \label{SectionModelSelection}
The construction of adaptive and optimal estimators over Hölder and Besov classes follows from the oracle inequality (\ref{eqInegaliteOracleConstant}). This inequality is itself deduced from Theorem~\ref{theoremConditionalDensity}.  Actually, this latter theorem can be applied in a different way to deduce  a more general oracle inequality.  We can then derive adaptive and (nearly) optimal estimators  over more general classes of functions.

\subsection{A general model selection theorem.} 
From now on, the following assumption holds.
\begin{hyp} \label{hypDensiteDesXiCasGeneral}
The (possibly unknown) density $f_X$ of $X_i$ is bounded above by a (possibly unknown) constant $\kappa$. Moreover, $\nu (A_1) \leq 1$.
\end{hyp}

Let  $\LL^2 (A,  \nu \otimes \mu)$ be the space of square integrable functions on $A$ with respect to the product measure $\nu \otimes \mu$ endowed with the distance
$$d_2^2 (f,f') =  \int_{A}  \left(f (x,y) - f'(x,y) \right)^2 \d \nu (x) \d \mu (y) \quad \text{for all $f,f' \in \LL^2 (A, {\nu} \otimes \mu)$.}$$
We say that a subset $V$ of  $\LL^2 (A,  \nu \otimes \mu)$ is a model if it is a finite dimensional linear  space.

The discretization trick described in Section~4.2 of~\cite{SartMarkov} can be adapted to our statistical setting. It leads to the  theorem below. 
 
 \begin{thm} \label{thmSelectionModelGeneral}
 Suppose that Assumption~\ref{hypDensiteDesXiCasGeneral} holds. Let $\V$ be an at most countable collection of models.
Let  $\Delta \geq 1$ be a map on $\V$ satisfying $$\sum_{V \in \V} e^{- \Delta (V)} \leq 1.$$
Then, there exists an estimator $\hat{s}$ such that for all $\xi > 0$
\begin{eqnarray} \label{eqGeneraleSelectionModele}
\quad \P \left[  h^2 (s , \hat{s})  \leq   C \left( \inf_{V \in \V} \left\{\kappa d_2^2 \left( {\sqrt{s}}, V \right) + \frac{\Delta (V) +  \dim (V) \log   n }{n} \right\} +  \frac{\kappa}{n^2} +   \xi \right) \right] \geq 1 - e^{- n  \xi},
\end{eqnarray} 
where $C > 0$ is universal. In particular,
$$   \E \left[h^2 (s , \hat{s}) \right]  \leq  C' \inf_{V \in \V} \left\{d_2^2 \left( {\sqrt{s}}, V \right) + \frac{\Delta (V) + \dim (V) \log  n }{n} \right\}, $$
where $C' > 0$ depends only on $\kappa$.
 \end{thm}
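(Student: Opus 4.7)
The strategy is to derive Theorem~\ref{thmSelectionModelGeneral} from Theorem~\ref{theoremConditionalDensity} by discretizing each model. First, since each $V \in \V$ is a linear space we have $0 \in V$, and Assumption~\ref{hypDensiteDesXiCasGeneral} gives
\[
d_2^2(\sqrt{s}, V) \leq \|\sqrt{s}\|_{\LL^2(\nu \otimes \mu)}^2 = \int_A s(x,y) \d \nu(x) \d \mu(y) \leq \nu(A_1) \leq 1.
\]
Consequently, any $g_V^{\star} \in V$ with $d_2^2(\sqrt{s}, g_V^{\star}) \leq d_2^2(\sqrt{s}, V) + 1/n^2$ satisfies $\|g_V^{\star}\|_2 \leq 2$.

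For each $V \in \V$ with $D = \dim V$, fix an orthonormal basis of $V$ in $\LL^2(\nu \otimes \mu)$ and, by a standard volumetric argument, build a finite $\eta$-net $G_V$ of the $d_2$-ball $\{g \in V : \|g\|_2 \leq 2\}$ with $\eta = 1/n$ and $|G_V| \leq (Cn)^D$ for a universal constant $C$. To each $g \in G_V$ associate $\pi(g^2) \in \mathcal{L}(A, \mu)$, where $\pi$ is the operator defined just before Proposition~\ref{propRisqueDeStilde}. Set $S_V = \{\pi(g^2) : g \in G_V\}$ and $S = \bigcup_{V \in \V} S_V$. Assign the weight $\bar{\Delta}(f) = \Delta(V) + \log(2|S_V|)$ to every $f \in S_V$: then $\bar{\Delta}(f) \geq 1$ and
\[
\sum_{f \in S} e^{-\bar{\Delta}(f)} = \sum_{V \in \V} \tfrac{1}{2} e^{-\Delta(V)} \leq \tfrac{1}{2}.
\]
Theorem~\ref{theoremConditionalDensity} thus applies to $S$ with the weight $\bar{\Delta}$, producing an estimator $\hat{s} \in S$ that satisfies the corresponding deviation inequality, with right-hand side expressed in terms of $\inf_{f \in S} \{ h^2(s,f) + L \bar{\Delta}(f)/n \}$.

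To bound this infimum from above, fix $V \in \V$, pick $g_V^{\star}$ as above and $\tilde{g} \in G_V$ with $d_2(g_V^{\star}, \tilde{g}) \leq 1/n$. The triangle inequality and $(a+b)^2 \leq 2a^2 + 2b^2$ yield $d_2^2(\sqrt{s}, \tilde{g}) \leq 2 d_2^2(\sqrt{s}, V) + 4/n^2$. By Proposition~\ref{propRisqueDeStilde}, $h(s, \pi(\tilde{g}^2)) \leq h(s, \tilde{g}^2)$, and since $\sqrt{s} \geq 0$ and $f_X \leq \kappa$,
\[
h^2(s, \tilde{g}^2) = \tfrac{1}{2} \int_A (\sqrt{s} - |\tilde{g}|)^2 f_X \d \nu \d \mu \leq \tfrac{\kappa}{2} \|\sqrt{s} - \tilde{g}\|_2^2 \leq \kappa d_2^2(\sqrt{s}, V) + 2\kappa/n^2.
\]
Combined with $\bar{\Delta}(\pi(\tilde{g}^2)) \leq \Delta(V) + C' D \log n$, plugging into the deviation inequality of Theorem~\ref{theoremConditionalDensity} and taking the infimum over $V$ yields~(\ref{eqGeneraleSelectionModele}). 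The expectation bound follows by integrating the tail over $\xi$.

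The main technical point is the quantitative discretization: the choice $\eta = 1/n$ makes the squared bias gap $O(\kappa/n^2)$, exactly matching the remainder term in~(\ref{eqGeneraleSelectionModele}), while keeping the log-cardinality of each net equal to $O(D \log n)$ so that it matches the prescribed penalty. The argument is simplified by (i) the linearity of each $V$, which gives $0 \in V$ and hence $d_2(\sqrt{s}, V) \leq 1$ uniformly in $V$, so the nets live in a fixed ball of radius~$2$, and (ii) the $\pi$-projection, which lets us square arbitrary net elements without having to verify integrability by hand.
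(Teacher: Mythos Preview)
Your proof is correct and follows essentially the same route as the paper's: discretize each model $V$ by a $1/n$-net of its radius-$2$ ball (whose cardinality is $\leq (Cn)^{\dim V}$), square the net elements, push them into $\mathcal{L}(A,\mu)$, apply Theorem~\ref{theoremConditionalDensity}, and bound $h^2(s,\tilde g^2)\le \kappa\, d_2^2(\sqrt s,\tilde g)$ via $f_X\le\kappa$ together with $|\sqrt s-|\tilde g||\le|\sqrt s-\tilde g|$. The only cosmetic differences are that the paper uses $f_+^2$ rather than $\pi(g^2)$ and defines $\bar\Delta(f)$ as the infimum over all $V$ with $f\in S_V$ (which cleanly handles the case where the $S_V$ overlap; your equality $\sum_{f\in S}e^{-\bar\Delta(f)}=\sum_V \tfrac12 e^{-\Delta(V)}$ should strictly be an inequality for that reason, but this is harmless).
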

As in Theorem~\ref{theoremConditionalDensity},  the condition $\sum_{V \in \V} e^{- \Delta(V)} \leq 1$ has a Bayesian flavour since it can be interpreted as a (sub)-probability on~$\V$. When $\V$ does not contain too many models per dimension, we can set $\Delta(V) = (\dim V) \log n$, in which case  (\ref{eqGeneraleSelectionModele}) becomes  
 $$\P \left[ h^2 (s , \hat{s})  \leq  C'' \left( \inf_{V \in \V} \left\{\kappa d_2^2 \left( {\sqrt{s}}, V \right) + \frac{\dim (V) \log   n }{n} \right\} +  \frac{\kappa}{n^2}  + \xi \right) \right] \geq 1 - e^{- n \xi},$$
 where $C''$ is universal.  

 This theorem is more general than Corollary~\ref{corrolarySelectionHisto} since it enables us to deal with more general models~$V$.  Moreover,  it provides a deviation bound for $h^2 (s, \hat{s})$, which is not the case of Corollary~\ref{corrolarySelectionHisto}. As a counterpart, it requires an assumption on the marginal density~$f_X$ and  the  bound  involves  a logarithmic term and $\kappa$. 

 Another difference between this theorem and  Corollary~\ref{corrolarySelectionHisto} lies in the computation time of the estimators. The estimator of  Corollary~\ref{corrolarySelectionHisto} may  be built in practice in a reasonable amount of time if  $|\mathcal{M}|$ is not too large. On the opposite, the procedure leading to the above estimator (which is described in the proof of the theorem) is numerically very expensive, and it is unlikely that it could be implemented in a reasonable amount of time. This estimator should therefore be only considered for theoretical purposes.

  \subsection{From model selection to  estimation.}   It is recognized that a model selection theorem such as Theorem~\ref{thmSelectionModelGeneral} is a bridge between statistics and approximation theory. Indeed, it remains to choose  models  with  good approximation properties with respect to the assumptions we wish to consider on $s$   to automatically derive  a good estimator $\hat{s}$.  
 
     A convenient way to model these assumptions is to  consider a class   $\F$  of functions of $\L^2 (A, \nu \otimes \mu)$ and to suppose that $\restriction{\sqrt{s}}{A}$ belongs to $\F$. The aim is then to choose $(\V, \Delta)$ and  to bound 
$$\varepsilon_{\F} (f) = \inf_{V \in \V} \left\{d_2^2 \left (f, V \right) + \frac{\Delta (V) +  \dim (V) \log   n }{n} \right\} \quad \text{for all $f \in \F$}$$ 
from above since 
\begin{eqnarray*}
 \E \left[ h^2(s, \hat{s})  \right] &\leq& C' \varepsilon_{\F} (\sqrt{s}) \\
 \P \left[ h^2(s, \hat{s}) \leq  C''  \varepsilon_{\F} (\sqrt{s}) +  C''' \xi \right] &\geq& 1 - e^{- n \xi} \quad \text{for all $\xi > 0$} 
\end{eqnarray*}   
where $C', C''$ depend only on $\kappa$ and  where $C'''$ is universal.
This work has already been carried out in the literature  for different classes~$\F$ of interest.  
The flexibility of our approach enables the study of various assumptions as illustrated by the three examples below.
 We  refer to~\cite{SartMarkov, BaraudComposite} for  additional examples. In the remainder  of this section, $\mu$ and  $\nu$ stand for the Lebesgue measure.
  
\subsubsection*{Besov classes.} We suppose  that  $\XX \times \YY = \R^d$, $A = [0,1]^d$ and that $\F$ is the class of smooth functions defined by
  $$\F = \mathscr{B}  ([0,1]^{d}) =    \bigcup_{p \in (0,+\infty)} \left( \bigcup_{\substack{\boldsymbol{\alpha} \in (0,+\infty)^{d} \\ \bar{\boldsymbol{\alpha}} >   d (1/p - 1/2)_+}}   \mathscr{B}^{\boldsymbol{\alpha}} (\L^p ([0,1]^{d}))\right).$$ 
It is then shown in~\cite{SartMarkov} that one can choose a collection $\V$ provided by Theorem~1 of~\cite{Akakpo2012} to get:
\begin{eqnarray} \label{eqEpsilonFForBesov}
\text{for all $f \in \mathscr{B}  ([0,1]^{d})$,} \quad \varepsilon_{\F} (f)  \leq  C  \left[\left|f\right|_{\boldsymbol{\alpha},p}^{2 d / (d +  2 \bar{\boldsymbol{\alpha}})} \left(\frac{\log n}{n} \right)^{2 \bar{\boldsymbol{\alpha}}/ (2 \bar{\boldsymbol{\alpha}} + d)}  + \frac{\log n}{n} \right],
\end{eqnarray}
where $p \in (0, +\infty)$, $\boldsymbol{\alpha} \in (0,+\infty)^{d}$, $\bar{\boldsymbol{\alpha}} >   d (1/p - 1/2)_+$ are such that $f \in \mathscr{B}^{\boldsymbol{\alpha}} (\L^p ([0,1]^{d}))$ and where $C > 0$ depends only on $d$, $p$, $\boldsymbol{\alpha}$. 

With this choice of models,  the estimator $\hat{s}$ of Theorem~\ref{thmSelectionModelGeneral} converges  at the expected rate (up to a logarithmic term) for the Hellinger deterministic loss $h$ over a very wide range of possibly inhomogeneous and anisotropic Besov spaces.  It is moreover adaptive with respect to the (possibly unknown) regularity index~$\boldsymbol{\alpha}$ of $\restriction{\sqrt{s}}{[0,1]^d}$.

 \subsubsection*{Regression model.}  
 We can also tackle  the celebrated regression model  $Y_i = g(X_i) + \varepsilon_i$ where~$g$ is an unknown function and where~$\varepsilon_i$ is an unobserved random variable. For the sake of simplicity,  $\XX = \YY = \R$, $A_1 = A_2 = [0,1]$. The conditional density~$s$ is of the form $s(x,y) =  \varphi \left(y - g(x) \right)$ where $\varphi$ is the density of $\varepsilon_i$ with respect to the Lebesgue measure.
   
 Since $\varphi$ and $g$ are unknown, we can, for instance, suppose that these functions are smooth,  which amounts to saying that $\restriction{\sqrt{s}}{[0,1]^2}$ belongs to  
$$\F = \bigcup_{\alpha > 0}  \left\{f ,\, \exists \phi \in \mathcal{H}^{\alpha} (\R), \exists g \in \mathscr{B} ([0,1]),  \|g\|_{\infty} < \infty ,  \, \forall x,y \in [0,1] ,\; f(x,y) = \phi (y - g (x))   \right\}.$$
Here, $ \mathcal{H}^{\alpha} (\R)$ stands for the space of Hölderian functions on $\R$ with regularity index $\alpha \in (0,+\infty)$ and semi norm  $|\cdot|_{\alpha, \infty}$. The notation $\|\cdot\|_{\infty}$ stands for the supremum norm: $ \|g\|_{\infty} = \sup_{x \in [0,1]} |g(x)|$.
An upper bound for $\varepsilon_{\F} (f)$ may be found in Section 4.4 of~\cite{SartMarkov}. Actually, we show in Section~\ref{SectionStructuralAssumptions} that this bound can be slightly improved. To be more precise, the result is the following:  for all   $\alpha > 0$, $p \in (0,+\infty]$, $\beta >  (1/p - 1/2)_+$, $\phi \in \mathcal{H}^{\alpha} (\R)$, $g \in \mathscr{B}^{\beta} (\L^{p} ([0,1]))$, such that $\|g\|_{\infty} < \infty$, and all function $f \in \F$ of the form $f(x,y) = \phi (y - g (x)) $,
\begin{eqnarray} \label{epsilonFPourRegression}
\varepsilon_{\F} (f) \leq   C_1 \left(\frac{\log n}{n}\right)^{\frac{2 \beta (\alpha \wedge 1)}{2 \beta (\alpha \wedge 1) + 1}}  + C_2 \left(\frac{\log n}{n} \right)^{\frac{2 \alpha}{2 \alpha + 1}},
\end{eqnarray}
 where $C_1$ depends only on $p$, $\beta$, $\alpha$, $|g|_{\beta,p}$, $\|g\|_{\infty}$, $|\phi|_{\alpha \wedge 1, \infty}$ and where $C_2$ depends only on $\alpha$, $\|g\|_{\infty}$, $|\phi|_{\alpha, \infty}$.

In particular, if $\phi$ is more regular than $g$ in the sense that $\alpha \geq  \beta \vee 1$, then the rate for estimating the conditional density~$s$ is the same as the one for estimating the regression function~$g$ (up to a logarithmic term). As shown in~\cite{SartMarkov}, this rate is always faster than the rate we would obtain under  smoothness assumptions  only that would ignore the specific form of $s$.

\paragraph{\textbf{Remark.}} The reader could find in~\cite{SartMarkov} a bound for $\varepsilon_{\F}$ when $\F$ corresponds to the heteroscedastic regression model $Y_i = g_1(X_i) + g_2 (X_i) \varepsilon_i$, where $g_1, g_2$ are  smooth unknown functions.

\subsubsection*{A single index type model.}  
In this last example, we investigate the situation in which the explanatory random variables $X_i$  lie in a high dimensional linear space, say $\XX = \R^{d_1}$ with $d_1$ large. On the contrary, the random variables $Y_i$   lie in a small dimensional linear space, say $\YY = \R^{d_2}$ with $d_2$ small. Our aim is then to estimate~$s$ on $A = A_1 \times A_2 = [0,1]^{d_1} \times [0,1]^{d_2}$.

It is well known (and this appears in (\ref{eqEpsilonFForBesov})) that the curse of dimensionality prevents  us to get fast rate of convergence under pure smoothness assumptions on $s$. A solution to overcome this difficulty is to use a single index approach as proposed by~\cite{Hall2005, FanPenYao09}, that is to suppose that the conditional distribution $\mathcal{L} (Y_i \mid X_i = x)$ depends on $x$ through an unknown parameter $\theta \in \R^{d_1}$.    
More precisely, we suppose in this section that $s$ is of the form  ${s(x,y)} = \varphi \left( <\theta, x >, y \right)$ where $<\cdot, \cdot >$ denotes the usual scalar product on $\R^{d_1}$  and where $\varphi$ is a smooth unknown function. 
Without loss of generality, we can suppose that $\theta$ belongs to the unit $\ell^{1}$ ball  of $\R^{d_1}$ denoted by $\mathcal{B}_{1} (0,1)$. We can reformulate these different assumptions by saying that $\restriction{\sqrt{s}}{[0,1]^{d_1+d_2}}$ belongs to the set
\begin{eqnarray*}
\F &=& \bigcup_{\boldsymbol{\alpha} \in (0,+\infty)^{1 + d_2} }  \left\{f ,\, \exists g \in \mathcal{H}^{\boldsymbol{\alpha}} ([0,1]^{1 + d_2}), \exists \theta \in \mathcal{B}_{1} (0,1),  \right. \\
& &  \qquad \qquad\qquad\qquad\qquad \left. \, \forall (x,y) \in [0,1]^{d_1+d_2},\; f(x,y) = g (<\theta, x> , y)   \right\}.
\end{eqnarray*} 
A collection of models $V$ possessing nice approximation properties with respect to the elements~$f$ of $\F$ can be built by using the results of~\cite{BaraudComposite}.  We prove in Section~\ref{SectionStructuralAssumptions} that   we can bound $\varepsilon_{\F} (f)$ as follows: for all  $\boldsymbol{\alpha} \in (0,+\infty)^{1 + d_2} $, $g \in \mathcal{H}^{\boldsymbol{\alpha}} ([0,1]^{1 + d_2})$, $\theta \in \mathcal{B}_{1} (0,1)$,  and all function $f \in \F$ of the form $f(x,y) = g (<\theta, x> , y)$,
\begin{eqnarray} \label{eqEpsilonFPourSingleIndex}
\varepsilon_{\F} (f) \leq   C_1 \left|g \right|_{\boldsymbol{\alpha}, \infty}^{\frac{2(1 +  d_2)}{1 +  d_2 + 2 \bar{\boldsymbol{\alpha}} } } \left(\frac{\log n}{n} \right)^{ \frac{2   \bar{\boldsymbol{\alpha}}}{2   \bar{\boldsymbol{\alpha}} + 1 + d_2} } + C_2  d_1 \frac{\log n \vee \log \left(|g|_{\alpha_1 \wedge 1, \infty}^2 / d_1\right)}{n},
\end{eqnarray} 
where $C_1$ depends only on $d_2$, $\boldsymbol{\alpha}$, and where $C_2$ depends only on $d_2$, $\alpha_1$.   Although $s$ is a function of $d_1 + d_2$ variables,  the rate of convergence of $\hat{s}$  corresponds to the estimation rate of  a smooth function $g$ of   $1 + d_2$ variables only (up to a logarithmic term).  

\section{Proofs} \label{SectionPreuves}

\subsection{Proof of Theorem~\ref{theoremConditionalDensity}.} \label{SectionPreuveThmConditionalDensity}
\begin{lemme} \label{lemmePreuveTheoremConditionalDensity}
For all $f, f' \in S$, and $\xi > 0$,  there exists an event $\Omega_{\xi} (f,f')$ such that  $\P\left[\Omega_{\xi} (f, f')\right] \geq 1 - e^{-n \xi}$ and on which:
\begin{eqnarray*} 
  \left(1 - \varepsilon \right) h^2 \left(s, f'  \right) + T\left(f,f'  \right) \leq\left(1 + \varepsilon \right) h^2 \left(s, f \right)  +  L_1 \xi,
\end{eqnarray*}
where $L_1 > 0$, $\varepsilon \in (0,1)$ are positive universal constants.
\end{lemme}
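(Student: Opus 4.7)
My plan is to combine a Bernstein concentration bound for $T(f,f')$ with a deterministic (pointwise) upper bound on its expectation, and absorb the resulting fluctuations into the $\varepsilon$-slack appearing on both sides of the desired inequality.

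First I would rewrite $T(f,f') = n^{-1}\sum_{i=1}^n W_i(f,f')$, where each $W_i$ is the sum of the three $i$-th summands (the last two being $\mu$-integrals depending only on $X_i$). The constraint $f, f' \in \mathcal{L}(A,\mu)$ together with the elementary bounds $|\sqrt{f'} - \sqrt{f}| \leq \sqrt{f+f'}$ and $\sqrt{f+f'}\,|\sqrt{f'} - \sqrt{f}| \leq f+f'$ yield $\|W_i\|_\infty \leq M$ for a universal $M$. Bernstein's inequality then provides, on an event $\Omega_\xi(f,f')$ of probability at least $1 - e^{-n\xi}$,
\begin{equation*}
T(f,f') - \E[T(f,f')] \leq \sqrt{2\sigma^2\xi} + \tfrac{M\xi}{3}, \qquad \sigma^2 := \mathrm{Var}(W_1(f,f')).
\end{equation*}
The key variance estimate $\sigma^2 \leq C_0\,(h^2(s,f) + h^2(s,f'))$ follows by applying the triangle bound $(\sqrt{f'} - \sqrt{f})^2 \leq 2(\sqrt{s} - \sqrt{f})^2 + 2(\sqrt{s} - \sqrt{f'})^2$ inside $\E[W_1^2]$; for the first piece, the factor $s(x,y)$ from the expectation cancels against the denominator $f+f'$ using $\min(f, f') \leq \tfrac12(f+f')$, which keeps the bound finite under the convention $0/0=0$.

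Next I would compute $\E[T(f,f')]$ explicitly. With $a = \sqrt{s}$, $b = \sqrt{f}$, $c = \sqrt{f'}$, $u = \sqrt{f+f'}$, direct integration against $s(x,y) f_X(x)$ (for the first piece) and against $f_X(x)$ (for the last two) gives
\begin{equation*}
\E[T(f,f')] = \int_A (c-b) \left( \frac{a^2}{u} + \frac{u}{2} - \frac{b+c}{\sqrt{2}} \right) f_X\, d\nu\, d\mu,
\end{equation*}
while $h^2(s,f) - h^2(s,f') = \tfrac{1}{2}\int_A (c-b)(b+c - 2a)\, f_X\, d\nu\, d\mu$. The algebraic heart of the argument is then the pointwise inequality
\begin{equation*}
(c-b)\left(\frac{a^2}{u} + \frac{u}{2} - \frac{b+c}{\sqrt{2}}\right) \leq \tfrac{1}{2}(c-b)(b+c-2a) + \eta\bigl((a-b)^2 + (a-c)^2\bigr),
\end{equation*}
valid for suitably small $\eta > 0$. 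This follows by expanding and bounding the cross contributions $a^2(c-b)/u$ and $u(c-b)/2$ via $2xy \leq \delta x^2 + y^2/\delta$ for a tuned $\delta$, on the support $\{f + f' > 0\}$ (both sides vanish on the complement by the convention $0/0 = 0$). Integrating against $f_X\,d\nu\,d\mu$ produces $\E[T(f,f')] \leq h^2(s,f) - h^2(s,f') + 2\eta\bigl(h^2(s,f) + h^2(s,f')\bigr)$.

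Finally I would combine both steps: plugging the variance bound into Bernstein and using $\sqrt{2\sigma^2\xi} \leq \delta\sigma^2 + \xi/(2\delta)$ transfers the fluctuation into a further small multiple of $h^2(s,f) + h^2(s,f')$ plus a contribution linear in $\xi$. Choosing $\eta$ and $\delta$ small enough produces the stated $\varepsilon \in (0,1)$, and collecting the $\xi$-coefficients yields a universal $L_1$. The main obstacle is the pointwise inequality in the third paragraph: the combination $a^2/u + u/2 - (b+c)/\sqrt{2}$ is precisely engineered so that, after Young, it reproduces $\tfrac{1}{2}(b+c-2a)$ up to a quadratic Hellinger-like remainder, and verifying this cleanly (with explicit constants, uniformly in the relative magnitudes of $b$ and $c$) is where the bookkeeping of the corrective pieces $T_2$ and $T_3$ becomes delicate.
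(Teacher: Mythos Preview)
Your overall scaffold---Bernstein for the fluctuation $T-\E[T]$, combined with deterministic control of $\E[T]$ and of the variance---is exactly the paper's. But both of the two ``key'' estimates you sketch have real gaps, and the paper resolves them with the same device you never mention: the midpoint $g=(f+f')/2$.

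\textbf{The pointwise inequality is false.} Your claimed bound (with the sign on the right corrected to $\tfrac12(c-b)(2a-b-c)$) cannot hold pointwise with any $\eta<1/2$. Take $b=0$, $c=1$, so $u=1$: the left side equals $a^{2}+\tfrac12-\tfrac1{\sqrt2}$ and grows like $a^{2}$, while the right side grows like $2\eta a^{2}$. Since $a=\sqrt{s}$ is not pointwise bounded, this breaks the argument. The paper bypasses any pointwise comparison by proving the \emph{exact identity}
\[
h^2(s,f')-h^2(s,f)=-\tfrac{1}{\sqrt2}\,\E[T(f,f')]
+\tfrac12\!\int_A\!\sqrt{\tfrac{f'}{g}}\bigl(\sqrt s-\sqrt g\bigr)^2\d\zeta
-\tfrac12\!\int_A\!\sqrt{\tfrac{f}{g}}\bigl(\sqrt s-\sqrt g\bigr)^2\d\zeta,
\]
then drops the last (negative) term, uses $\sqrt{f'/g}\le\sqrt2$ pointwise, and finishes with the concavity bound $h^2(s,g)\le\tfrac12\bigl(h^2(s,f)+h^2(s,f')\bigr)$.

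\textbf{The variance argument does not go through as written.} For the first summand one has $\E[T_{1,i}^2]=\int_A\frac{(\sqrt{f'}-\sqrt f)^2}{f+f'}\,s\,f_X\d\nu\d\mu$. Inserting your triangle bound $(\sqrt{f'}-\sqrt f)^2\le 2(\sqrt s-\sqrt f)^2+2(\sqrt s-\sqrt{f'})^2$ still leaves the uncontrolled ratio $s/(f+f')$ in the integrand; the inequality $\min(f,f')\le\tfrac12(f+f')$ says nothing about $s$ versus $f+f'$. The paper's fix is again the midpoint: write $s=(\sqrt s-\sqrt g+\sqrt g)^2$, expand, and use $(\sqrt{f'}-\sqrt f)^2/g\le 2$ pointwise to reduce to $4h^2(s,g)+2h^2(f,f')\le 6\bigl(h^2(s,f)+h^2(s,f')\bigr)$.

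Once you route both estimates through $g=(f+f')/2$, the remainder of your outline (Bernstein, then $2\sqrt{xy}\le \alpha x+\alpha^{-1}y$ to absorb the variance contribution) matches the paper exactly.
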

\begin{proof}
Let $\psi_1$ and $\psi_2$ be the functions defined on $\left(\R_+\right)^2$ by 
\begin{eqnarray*}
\psi_1 (x,y) &=& \frac{\sqrt{y}-\sqrt{x}}{\sqrt{x + y}} \\
\psi_2 (x,y) &=& \sqrt{\frac{x +y }{2}} - \left( \sqrt{x}  + \sqrt{y} \right)
\end{eqnarray*}
  where the convention $0/0 = 0$ is  used.  Let 
  \begin{eqnarray*}
  T_{1,i} (f,f') &=& \psi_1 \left( f(X_i, Y_i), f'(X_i,Y_i)  \right)    \\
  T_{2,i} (f,f') &=& \frac{1}{\sqrt{2}} \int_{A_2} \psi_2 \left(f(X_i,y), f'(X_i,y) \right)  \left( \sqrt{f'(X_i,y)} - \sqrt{f(X_i,y)} \right)  d \mu (y).
  \end{eqnarray*}
  We decompose $T(f,f')$ as  
 \begin{eqnarray*}
T(f,f') = \frac{1}{n} \sum_{i=1}^n  \left(T_{1,i} (f,f')  +  T_{2,i} (f,f') \right)
\end{eqnarray*} 
and  define  $Z(f,f') = T(f,f') - \E[T(f,f')]$.

We need the claim below  whose proof requires the same arguments than those developed in the proofs of    Corollary~1 and Proposition~3 of~\cite{BaraudMesure}. As these arguments are short, we make them explicit at the end of this section to make the paper self contained.
\begin{Claim} \label{ClaimPreuveIneg}
For all $f,f' \in S$,
\begin{eqnarray}
  \left(\sqrt{2}-1 \right) h^2 \left(s, f'  \right) + T\left(f,f'  \right)  &\leq& \left(1+\sqrt{2} \right) h^2 \left(s, f \right) + Z \left(f, f'  \right) \label{eqMajorationTffp}\\
  \E \left[ T_{1,i}^2 \left( f, f'  \right)\right] &\leq& 6 \left[h^2(s,f) + h^2(s,f') \right]. \label{eqMajorationTffpCarre}
\end{eqnarray}
\end{Claim}
  By using Cauchy-Schwarz inequality, 
\begin{eqnarray}
 \left(T_{2,i} (f,f') \right)^2 &\leq&  \left( \int_{A_2} \left(\psi_2 \left(f(X_i,y), f'(X_i,y) \right) \right)^2  \d \mu (y) \right) \times  \nonumber  \\
& & \qquad \qquad\qquad\qquad   \left(  \frac{1}{2}  \int_{A_2}   \left( \sqrt{f'(X_i,y)} - \sqrt{f(X_i,y)} \right)^2 d \mu (y) \right). \label{eqMajorationT2i}
\end{eqnarray}
Note that the function $$z \in [0,\infty) \mapsto \frac{\sqrt{1+z}}{1+\sqrt{z}}$$
is bounded below by $1/\sqrt{2}$ and bounded above by $1$. Therefore, for all $z \geq 0$,
\begin{eqnarray*}
\frac{1}{\sqrt{2}} \leq \frac{\sqrt{1+z}}{1+\sqrt{z}}  \leq 1 &\Longleftrightarrow& \frac{1 + \sqrt{z}}{2} \leq \sqrt{\frac{1+z}{2}} \leq \frac{1+\sqrt{z}}{\sqrt{2}} \\
&\Longleftrightarrow&  -\frac{1 + \sqrt{z}}{2} \leq \sqrt{\frac{1+z}{2}} - (1 + \sqrt{z}) \leq  \frac{1-\sqrt{2}}{\sqrt{2}} \left(1+\sqrt{z}\right) \\
  &\Longrightarrow& \left|\sqrt{\frac{1+z}{2}} - (1 + \sqrt{z}) \right| \leq \frac{1 + \sqrt{z}}{2}.
\end{eqnarray*}
For all $x,y \geq 0$, we derive from this inequality with $z = x/y$ that
\begin{eqnarray*}
\left|\psi_2(x,y) \right|  \leq   \frac{\sqrt{x}+\sqrt{y}}{2} \quad \text{for all $x,y \geq 0$.}
\end{eqnarray*}
Thereby,
\begin{eqnarray*}
\left(\psi_2 (x,y)\right)^2 \leq   \frac{(\sqrt{x}+\sqrt{y})^2}{4} \leq  \frac{x + y}{2},
\end{eqnarray*} 
which together with  $f,f' \in \mathcal{L} (A, \mu)$ and (\ref{eqMajorationT2i}) yields
\begin{eqnarray*}
\E \left[ \left(T_{2,i} (f,f') \right)^2 \right] &\leq&    h^2(f,f') \\
&\leq& 2 h^2(s,f) + 2 h^2 (s,f').
\end{eqnarray*}
By using (\ref{eqMajorationTffpCarre}), we get
\begin{eqnarray*}
\E \left[ \left(T_{1,i} (f,f')  + T_{2,i} (f,f') \right)^2  \right] &\leq& 2 \E \left[ \left(T_{1,i} (f,f')\right)^2  + \left(T_{2,i} (f,f') \right)^2  \right]  \\
&\leq&  16 h^2(s,f) + 16 h^2(s,f').
\end{eqnarray*} 
Now, $T_{1,i} (f,f') \leq 1$ as $\psi_1$ is bounded by $1$ and
\begin{eqnarray*}
T_{2,i} (f,f')    
&\leq& \frac{1}{\sqrt{2}} \int_{A_2}  \left| \frac{\sqrt{f'(X_i,y)} + \sqrt{f'(X_i,y)}}{2}   \right|  \left| \sqrt{f'(X_i,y)} - \sqrt{f(X_i,y)} \right|  d \mu (y)  \\
&\leq&  \frac{1}{\sqrt{2}} \int_{A_2}      \frac{f'(X_i,y)  + f (X_i,y)}{2}  d \mu (y) \\
&\leq& \frac{1}{\sqrt{2}}.
\end{eqnarray*}
Bernstein's inequality and more precisely equation $(2.20)$ of~\cite{Massart2003} shows that for all $\xi > 0$,
$$\P \left[Z(f,f') \leq     \sqrt{32    \left( h^2(s,f) +  h^2(s,f')\right) \xi  } +   (1+1/\sqrt{2}) \xi/3 \right] \geq 1 - e^{- n \xi}.$$
Using now that
$$2 \sqrt{x y} \leq \alpha x + \alpha^{-1} y \quad \text{for all $x,y \geq 0$ and $\alpha > 0$,}$$
we get with probability larger than $1-e^{-n \xi}$,
$$Z(f,f') \leq  \alpha   \sqrt{8}    \left( h^2(s,f) +  h^2(s,f')\right)  +  \left((1+1/\sqrt{2})/3  + \sqrt{8} \alpha^{-1} \right)  \xi.$$
Therefore, we deduce from (\ref{eqMajorationTffp}),
\begin{eqnarray*} 
\left(\sqrt{2}-1 -  \alpha \sqrt{8} \right) h^2 \left(s, f'  \right) + T\left(f,f'  \right)  \leq\left(\sqrt{2}+1  + \alpha \sqrt{8} \right) h^2 \left(s, f \right) +  \left((1+1/\sqrt{2})/3  + \sqrt{8} \alpha^{-1} \right) \xi.
\end{eqnarray*} 
It remains to choose  $\alpha$ to complete the proof. Any value $\alpha \in (0, (\sqrt{2}-1)/ \sqrt{8} )$ works. 
\end{proof}
\begin{lemme} \label{lemmePreuveTheoremConditionalDensity2}
For all   $\xi > 0$ and $f \in S$, there exists an event $\Omega_{\xi} (f) $ such that $\P\left[\Omega_{\xi} (f) \right] \geq 1 - e^{-n \xi}$ and on which:
\begin{eqnarray} \label{eqlemmePreuveTheoremConditionalDensity2} 
\forall f' \in S, \quad \left(1 - \varepsilon \right) h^2 \left(s, f'  \right) + T\left(f,f'  \right) \leq\left(1 + \varepsilon \right) h^2 \left(s, f \right) + L_1 \frac{\bar{\Delta} (f')}{n}  + L_1 \xi,
\end{eqnarray}
where $L_1 > 0$, $\varepsilon \in (0,1)$ are given in Lemma~\ref{lemmePreuveTheoremConditionalDensity}. Moreover, there exists an event $\Omega_{\xi}$ such that  $\P\left[\Omega_{\xi}  \right] \geq 1 - e^{-n \xi}$ and on which:
\begin{eqnarray} \label{eqlemmePreuveTheoremConditionalDensity23}
\quad\quad\quad\quad \forall f, f' \in S,  \; \left(1 - \varepsilon \right) h^2 \left(s, f'  \right) + T\left(f,f'  \right) \leq\left(1 + \varepsilon \right) h^2 \left(s, f \right) + L_1 \frac{\bar{\Delta} (f')}{n} + L_1 \frac{\bar{\Delta} (f)}{n}  + L_1 \xi.
\end{eqnarray}
\end{lemme}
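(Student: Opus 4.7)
My plan is to obtain both statements directly from Lemma~\ref{lemmePreuveTheoremConditionalDensity} by a weighted union bound, where the weights come from the sub-probability condition $\sum_{f \in S} e^{-\bar{\Delta}(f)} \leq 1$.

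For the first assertion, I fix $f \in S$ and, for each $f' \in S$, I apply Lemma~\ref{lemmePreuveTheoremConditionalDensity} not with the given $\xi$ but with $\xi_{f'} = \xi + \bar{\Delta}(f')/n$. This produces an event $\Omega_{\xi_{f'}}(f,f')$ of probability at least $1 - e^{-n\xi_{f'}} = 1 - e^{-n\xi}e^{-\bar{\Delta}(f')}$ on which
\[
(1-\varepsilon) h^2(s,f') + T(f,f') \leq (1+\varepsilon) h^2(s,f) + L_1 \xi_{f'} = (1+\varepsilon) h^2(s,f) + L_1 \frac{\bar{\Delta}(f')}{n} + L_1 \xi.
\]
I then set $\Omega_\xi(f) = \bigcap_{f' \in S} \Omega_{\xi_{f'}}(f,f')$. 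The sub-probability assumption gives
\[
\P\!\left[\Omega_\xi(f)^{c}\right] \leq \sum_{f' \in S} e^{-n\xi} e^{-\bar{\Delta}(f')} \leq e^{-n\xi},
\]
and on $\Omega_\xi(f)$ the desired inequality \eqref{eqlemmePreuveTheoremConditionalDensity2} holds uniformly in $f' \in S$.

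For the second assertion I repeat the same argument but now symmetrically in $f$ and $f'$: I apply Lemma~\ref{lemmePreuveTheoremConditionalDensity} to every ordered pair $(f,f') \in S^2$ with the shifted level $\xi_{f,f'} = \xi + \bar{\Delta}(f)/n + \bar{\Delta}(f')/n$, producing events $\Omega_{\xi_{f,f'}}(f,f')$ of probability at least $1 - e^{-n\xi} e^{-\bar{\Delta}(f)} e^{-\bar{\Delta}(f')}$, on each of which
\[
(1-\varepsilon) h^2(s,f') + T(f,f') \leq (1+\varepsilon) h^2(s,f) + L_1 \frac{\bar{\Delta}(f)}{n} + L_1 \frac{\bar{\Delta}(f')}{n} + L_1 \xi.
\]
Setting $\Omega_\xi = \bigcap_{f,f' \in S} \Omega_{\xi_{f,f'}}(f,f')$ and applying the sub-probability bound twice yields
\[
\P\!\left[\Omega_\xi^{c}\right] \leq e^{-n\xi} \Bigl(\sum_{f \in S} e^{-\bar{\Delta}(f)}\Bigr) \Bigl(\sum_{f' \in S} e^{-\bar{\Delta}(f')}\Bigr) \leq e^{-n\xi},
\]
so that \eqref{eqlemmePreuveTheoremConditionalDensity23} holds on $\Omega_\xi$.

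There is no real obstacle here; the only point requiring minor care is that the $\xi$ appearing in Lemma~\ref{lemmePreuveTheoremConditionalDensity} multiplies the same universal constant $L_1$ as the penalty term I want to obtain, so the substitution $\xi \mapsto \xi + \bar{\Delta}(\cdot)/n$ produces exactly the two terms $L_1 \bar{\Delta}(\cdot)/n$ and $L_1 \xi$ on the right-hand side without introducing new constants. Countability of $S$ guarantees that the countable intersections defining $\Omega_\xi(f)$ and $\Omega_\xi$ are measurable, and the sub-probability assumption on $\bar{\Delta}$ is precisely what turns the union bound into an $e^{-n\xi}$ deviation bound.
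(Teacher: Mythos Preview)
Your proof is correct and essentially identical to the paper's: the paper also sets $\Omega_\xi(f)=\bigcap_{f'\in S}\Omega_{\xi+\bar\Delta(f')/n}(f,f')$ and then $\Omega_\xi=\bigcap_{f\in S}\Omega_{\xi+\bar\Delta(f)/n}(f)$, which unfolds exactly to your double intersection $\bigcap_{f,f'}\Omega_{\xi+\bar\Delta(f)/n+\bar\Delta(f')/n}(f,f')$. The only difference is cosmetic—the paper presents the second event as a nested application of the first construction rather than writing the double intersection directly.
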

\begin{proof}
The result follows easily from Lemma~\ref{lemmePreuveTheoremConditionalDensity} by setting $$\Omega_{\xi} (f) = \bigcap_{f' \in S} \Omega_{\xi +  \frac{\bar{\Delta}(f') }{n}} (f,f') \quad \text{and} \quad \Omega_{\xi}  = \bigcap_{f \in S} \Omega_{\xi +  \frac{\bar{\Delta}(f)}{n}} (f).  $$
\end{proof}

\begin{lemme} \label{lemmePreuveTheoremConditionalDensity3}
Set $L_0 = (1 + \log 2) L_1$ where $L_1$ is given in Lemma~\ref{lemmePreuveTheoremConditionalDensity}. For all $\xi > 0$,  the following holds with probability larger than $1 - e^{- n \xi}$: if $L \geq  L_0$, for all $f \in S$,
\begin{eqnarray} \label{eqlemmePreuveTheoremConditionalDensity3}
(1-\varepsilon) h^2(s,f)  - R_1 (\xi)  \leq \gamma (f) + L \frac{\bar{\Delta}(f)}{n} \leq (1+\varepsilon) h^2(s,f) + 2 L \frac{\bar{\Delta}(f)}{n} + R_2 (\xi)
\end{eqnarray}
where 
\begin{eqnarray*}
R_1 (\xi)  &=&  \inf_{f' \in S} \left\{ (1+\varepsilon) h^2(s,f') + L \frac{\bar{\Delta}(f')}{n} \right\} + c_1 \xi \\
R_2 (\xi) &=&  - (1- \varepsilon) h^2(s,S) + c_2 \xi
\end{eqnarray*}
and where $\varepsilon$ is given in Lemma~\ref{lemmePreuveTheoremConditionalDensity}, and  $c_1$ and $c_2$ are universal positive constants ($c_1 = 2 L_1$ and $c_2 =  L_1$).
\end{lemme}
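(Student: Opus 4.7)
The plan is to prove the upper and lower bounds on two separate events, each of probability at least $1 - e^{-n\xi}/2$, so that their intersection still has probability at least $1 - e^{-n\xi}$. The calibration $L_0 = (1+\log 2) L_1$ is chosen so that the overhead $L_1 (\log 2)/n$ incurred by replacing $\xi$ by $\xi + (\log 2)/n$ (to halve each failure probability) can be absorbed into the penalty $L\bar{\Delta}(\cdot)/n$ using $\bar{\Delta} \geq 1$. The upper bound follows directly from the second part of Lemma~\ref{lemmePreuveTheoremConditionalDensity2}; the lower bound is the delicate part, since a naive application of Lemma~\ref{lemmePreuveTheoremConditionalDensity2} with $f$ and $f'$ swapped would introduce the penalty $(L+L_1)\bar{\Delta}(f')/n$ inside the infimum defining $R_1$, which cannot be cleanly replaced by $L\bar{\Delta}(f')/n$. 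I bypass this by fixing upfront a deterministic approximate minimizer $f^\star$ and applying Lemma~\ref{lemmePreuveTheoremConditionalDensity} only to the pairs $(f^\star, f)$, union bounding only over $f$.

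For the upper bound, I work on the event of Lemma~\ref{lemmePreuveTheoremConditionalDensity2} at level $\xi' = \xi + (\log 2)/n$. Rearranging (\ref{eqlemmePreuveTheoremConditionalDensity23}) gives, for every $f, f' \in S$,
\[
T(f,f') - L\bar{\Delta}(f')/n \leq (1+\varepsilon) h^2(s,f) - (1-\varepsilon) h^2(s,f') + L_1\bar{\Delta}(f)/n - (L-L_1)\bar{\Delta}(f')/n + L_1\xi + L_1(\log 2)/n.
\]
Since $L \geq L_1$, the quantity $-\inf_{f'}\{(1-\varepsilon) h^2(s,f') + (L-L_1)\bar{\Delta}(f')/n\}$ is at most $-(1-\varepsilon) h^2(s,S)$. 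Taking the supremum over $f'$ and adding $L\bar{\Delta}(f)/n$ leaves a penalty $(L+L_1)\bar{\Delta}(f)/n + L_1(\log 2)/n$, which is bounded by $(L+L_0)\bar{\Delta}(f)/n \leq 2L\bar{\Delta}(f)/n$ (using $\bar{\Delta}(f) \geq 1$ and $L \geq L_0$); this yields the upper bound with $c_2 = L_1$.

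For the lower bound, fix a deterministic $f^\star \in S$ such that $(1+\varepsilon) h^2(s,f^\star) + L\bar{\Delta}(f^\star)/n \leq \inf_{f' \in S}\{(1+\varepsilon) h^2(s,f') + L\bar{\Delta}(f')/n\} + L_1 \xi$ (such an $f^\star$ exists since $\xi > 0$), and apply Lemma~\ref{lemmePreuveTheoremConditionalDensity} to each pair $(f^\star, f)$ at level $\xi' + \bar{\Delta}(f)/n$. Union bounding only over $f$, which is allowed because $\sum_f e^{-\bar{\Delta}(f)} \leq 1$, produces an event of probability at least $1 - e^{-n\xi'} = 1 - e^{-n\xi}/2$. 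Combined with the antisymmetry $T(f^\star, f) = -T(f, f^\star)$, this gives
\[
T(f,f^\star) \geq (1-\varepsilon) h^2(s,f) - (1+\varepsilon) h^2(s,f^\star) - L_1\bar{\Delta}(f)/n - L_1\xi - L_1(\log 2)/n.
\]
Using $\gamma(f) \geq T(f,f^\star) - L\bar{\Delta}(f^\star)/n$, adding $L\bar{\Delta}(f)/n$, and absorbing the $L_1(\log 2)/n$ overhead into $(L-L_1)\bar{\Delta}(f)/n$ (since $L \geq L_0$ and $\bar{\Delta}(f) \geq 1$), I obtain
\[
\gamma(f) + L\bar{\Delta}(f)/n \geq (1-\varepsilon) h^2(s,f) - \bigl[(1+\varepsilon) h^2(s,f^\star) + L\bar{\Delta}(f^\star)/n\bigr] - L_1\xi,
\]
which by the choice of $f^\star$ is at least $(1-\varepsilon) h^2(s,f) - R_1(\xi)$ with $c_1 = 2L_1$. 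The two events intersect with probability at least $1 - e^{-n\xi}$, completing the argument. The main subtlety is the asymmetric handling of the two bounds, forced by the fact that an unfavourable $L_1 \bar{\Delta}(f')/n$ term would otherwise pollute the infimum on the lower-bound side.
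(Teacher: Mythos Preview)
Your proof is correct and follows essentially the same route as the paper's: the paper also fixes a deterministic near-minimizer $g$ (your $f^\star$), works on the intersection $\Omega_{\xi+(\log 2)/n}(g)\cap\Omega_{\xi+(\log 2)/n}$ of the two events of Lemma~\ref{lemmePreuveTheoremConditionalDensity2}, uses the second event for the upper bound and the first (with the antisymmetry of $T$) for the lower bound, and absorbs the $L_1(\log 2)/n$ overhead in exactly the same way. The only cosmetic difference is that you rebuild the event $\Omega_{\xi'}(f^\star)$ from Lemma~\ref{lemmePreuveTheoremConditionalDensity} by hand rather than quoting~(\ref{eqlemmePreuveTheoremConditionalDensity2}) directly.
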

\begin{proof}
Let   $g \in S$ be such that
$$(1 + \varepsilon)  h^2 \left(s, g \right)  +  L \frac{\bar{\Delta} (g) }{n}  \leq  \inf_{f' \in S}  \left\{(1 + \varepsilon)  h^2 \left(s, f' \right)  +  L \frac{\bar{\Delta} (f') }{n} \right\} +  L_1 \xi.$$
We shall show that (\ref{eqlemmePreuveTheoremConditionalDensity3}) holds  on the event  $\Omega_{ \xi + \frac{\log 2}{n}} (g)  \cap \Omega_{\xi + \frac{\log 2}{n}}.$
We derive from  (\ref{eqlemmePreuveTheoremConditionalDensity23}) that  for all $f, f' \in S$,  
\begin{eqnarray*}
 \left(1 - \varepsilon \right) h^2 \left(s, f'  \right) + \left(T\left(f,f'  \right) - L \frac{ \bar{\Delta} (f')}{n}\right) &\leq& \left(1 + \varepsilon \right) h^2 \left(s, f \right) +   L_1 \frac{ \bar{\Delta} (f)}{n} + L_1 \frac{ \log 2}{n} + L_1 \xi  \\
 &\leq& \left(1 + \varepsilon \right) h^2 \left(s, f \right) +  L \frac{ \bar{\Delta} (f)}{n} + L_1 \xi,
\end{eqnarray*}
which in particular implies
\begin{eqnarray*}
\gamma(f) &\leq& \left(1 + \varepsilon \right) h^2 \left(s, f \right) +   L \frac{ \bar{\Delta} (f)}{n} - (1- \varepsilon) h^2(s,S) + L_1 \xi \\
&\leq& \left(1 + \varepsilon \right) h^2 \left(s, f \right)  +   L \frac{ \bar{\Delta} (f)}{n} + R_2 (\xi).
\end{eqnarray*} 
This proves the right inequality of  (\ref{eqlemmePreuveTheoremConditionalDensity3}). We now turn to the left one.  We use (\ref{eqlemmePreuveTheoremConditionalDensity2}) to get for all $f \in S$, 
\begin{eqnarray*}
\left(1 - \varepsilon \right) h^2 \left(s, f  \right)  &\leq& \left(1 + \varepsilon \right) h^2 \left(s, g \right) + T\left(f , g  \right) +  L_1 \frac{\bar{\Delta} (f)}{n}  + L_1 \frac{\log 2}{n} + L_1 \xi   \\
&\leq& \left(1 + \varepsilon \right) h^2 \left(s, g \right)  +  L \frac{\bar{\Delta} (g)}{n}  + \left( T\left(f , g  \right) -  L \frac{\bar{\Delta} (g)}{n}  \right) +  L \frac{\bar{\Delta} (f)}{n}  + L_1 \xi  \\
&\leq& \left(1 + \varepsilon \right) h^2 \left(s, g \right)  +  L \frac{\bar{\Delta} (g)}{n}  +  \gamma(f) +  L \frac{\bar{\Delta} (f)}{n}  + L_1 \xi \\
&\leq& \inf_{f' \in S}  \left\{(1 + \varepsilon)  h^2 \left(s, f' \right)  +  L \frac{\bar{\Delta} (f') }{n} \right\}  + \gamma(f)  +  L \frac{\bar{\Delta} (f)}{n} + 2 L_1 \xi.
\end{eqnarray*}
This ends the proof.
\end{proof}
The computations preceding Theorem~\ref{theoremConditionalDensity} finally complete its proof. \qed

\begin{proof}  [Proof of Claim~\ref{ClaimPreuveIneg}] 
Define the function $g = (f+f')/2$ and the measure $\zeta$ by $$\d \zeta(x,y) = f_X(x) \d \nu (x) \d \mu (y).$$ We have,
\begin{eqnarray*}
\E \left[T\left(f,f'  \right) \right] &=& \frac{1}{\sqrt{2}} \int_A \frac{\sqrt{f'} - \sqrt{f}}{\sqrt{g}} s  \d \zeta  + \frac{1}{\sqrt{2}} \int_{A} \sqrt{g} \left(\sqrt{f'} - \sqrt{f} \right)  \d \zeta + \frac{1}{\sqrt{2}} \int_{A}  \left({f} - {f'} \right)   \d \zeta \\
&=& \frac{1}{\sqrt{2}}   \left(\int_A \sqrt{\frac{{f'}}{{g}}} s  \d \zeta  + \int_{A} \sqrt{g f'} \d \zeta -  \int_{A}  {f'}   \d \zeta \right) \\
& & \quad -  \frac{1}{\sqrt{2}}  \left(  \int_A \sqrt{\frac{{f}}{{g}}} s  \d \zeta  +\int_{A} \sqrt{g f} \d \zeta -   \int_{A}  {f}   \d \zeta \right).
\end{eqnarray*}
Now,
\begin{eqnarray} \label{eqPreuveEsperanceT2maj}
h^2(s,f') - h^2(s,f) &=&  \left(\int_{A} \sqrt{s f} \d \zeta - \frac{1}{2}\int_{A} f   \d \zeta \right) -\left(\int_{A} \sqrt{s  f' } \d \zeta  - \frac{1}{2}\int_{A} f'  \d \zeta \right) \nonumber \\
&=& -\frac{1}{\sqrt{2}} \E \left[T(f,f')\right] + \frac{1}{2} \left(\int_A \sqrt{\frac{{f'}}{{g}}} s  \d \zeta  + \int_{A} \sqrt{g f'} \d \zeta -  2 \int_{A} \sqrt{s  f' } \d \zeta  \right) \nonumber  \\
& & - \frac{1}{2} \left(\int_A \sqrt{\frac{{f}}{{g}}} s  \d \zeta  + \int_{A} \sqrt{g f} \d \zeta -  2 \int_{A} \sqrt{s  f } \d \zeta  \right) \nonumber \\ 
&=& -\frac{1}{\sqrt{2}} \E \left[T(f,f')\right]  + \frac{1}{2} \int_{A} \sqrt{\frac{{f'}}{{g}}} \left(\sqrt{s} - \sqrt{g} \right)^2 \d \zeta    - \frac{1}{2} \int_{A} \sqrt{\frac{{f}}{{g}}} \left(\sqrt{s} - \sqrt{g} \right)^2 \d \zeta \nonumber \\
&\leq&  -\frac{1}{\sqrt{2}} \E \left[T(f,f')\right]   + \frac{1}{2} \int_{A} \sqrt{\frac{{f'}}{{g}}} \left(\sqrt{s} - \sqrt{g} \right)^2 \d \zeta. \nonumber
\end{eqnarray}
By using  $\sqrt{f'/g} \leq \sqrt{2}$,  and a concavity argument,
 \begin{eqnarray*} \label{eqPreuveEsperanceT21}
\frac{1}{2} \int_{A} \sqrt{\frac{{f'}}{{g}}} \left(\sqrt{s} - \sqrt{g} \right)^2 \d \zeta  &\leq&  \sqrt{2} h^2(s,g) \\
&\leq& \frac{1}{\sqrt{2}} \left(h^2(s,f) + h^2(s,f') \right).
 \end{eqnarray*}
We now derive from
    $-\E \left[T(f,f')\right] = -T(f,f') + Z(f,f')$ that,
 \begin{eqnarray*}
h^2(s,f') - h^2(s,f) &\leq& \frac{-T(f,f') + Z(f,f')}{\sqrt{2}} + \frac{1}{\sqrt{2}} \left(h^2(s,f) + h^2(s,f') \right).
\end{eqnarray*}
This proves (\ref{eqMajorationTffp}).

We now turn to the proof of (\ref{eqMajorationTffpCarre}): 
\begin{eqnarray*}
\E \left[ T_{1,i}^2 \left( f, f'  \right)\right] &=& \frac{1}{2} \int_{A} \frac{\left(\sqrt{f'} - \sqrt{f}\right)^2}{g} s \d \zeta \\
 &=& \frac{1}{2} \int_{A}  \left(\sqrt{f'} - \sqrt{f}\right)^2 \left(\frac{\sqrt{s}-\sqrt{g}}{\sqrt{g}} + 1 \right)^2 \d \zeta \\
  &\leq&   \int_{A}  \frac{\left(\sqrt{f'} - \sqrt{f}\right)^2}{g} \left({\sqrt{s}-\sqrt{g}}\right)^2 \d \zeta + \int_{A}  \left(\sqrt{f'} - \sqrt{f}\right)^2 \d \zeta.
\end{eqnarray*}
Now, $(\sqrt{f'} - \sqrt{f})^2/{g} \leq 2$ and hence,
\begin{eqnarray*}
\E \left[ T_{1,i}^2 \left( f, f'  \right)\right]  
  &\leq&  2 \int_{A}   \left({\sqrt{s}-\sqrt{g}}\right)^2 \d \zeta + \int_{A}  \left(\sqrt{f'} - \sqrt{f}\right)^2 \d \zeta  \\
  &\leq& 4 h^2(s,g) + 2 h^2(f,f') \\
  &\leq& 4 h^2(s,g) + 4 h^2(s,f) + 4 h^2(s,f').
\end{eqnarray*}
By using a concavity argument, $h^2(s,g) \leq 1/2 (h^2(s,f) + h^2(s,f'))$. Finally,
\begin{eqnarray*}
\E \left[ T_{1,i}^2 \left( f, f'  \right)\right]     \leq  6 \left[ h^2(s,f) + h^2(s,f') \right],
\end{eqnarray*}
which proves (\ref{eqMajorationTffpCarre}).
\end{proof}

\subsection{Proof of Proposition~\ref{propRisqueDeStilde}.}
As $f$ belongs to $\mathbb{L}_+^1 (A, \nu \otimes \mu)$, Fubini's theorem says that there exists $A_1' \subset A_1$ such that  $\nu (A_1 \setminus A_1') = 0$ and such that
$$\forall x \in A_1', \quad  \int_{A_2} f(x,y) \d \mu (y) < \infty.$$
Let  $(\mathbb{L}^2 (A_2,  \mu), \|\cdot\|)$  be the linear space of square integrable functions on $A_2$ with respect to $\mu$.  

For all $x \in A_1'$,  $\sqrt{f(x, \cdot)}$ belongs to $\mathbb{L}^2 (A_2,  \mu)$ and 
\begin{eqnarray*} 
\sqrt{ \pi (f) (x,y)} = \frac{\sqrt{ f (x,y)}}{\max \left(\| \sqrt{f (x, \cdot)}\|, 1 \right) } \quad \text{for all $(x,y) \in A_1' \times A_2$}.
\end{eqnarray*}
Note that $\sqrt{ \pi (f) (x,\cdot)}$  is the projection of $\sqrt{f (x,\cdot)}$ onto the unit ball $\{g \in \mathbb{L}^2 (A_2,  \mu), \, \left\|g\right\| \leq 1 \}$. As the projection is Lipschitz continuous,
\begin{eqnarray*}
\left\|\sqrt{\pi({s}) (x,\cdot)} - \sqrt{\pi (f) (x,\cdot)}  \right\|^2  \leq   \left\|\sqrt{{s} (x,\cdot)} - \sqrt{f(x,\cdot)}  \right\|^2 \quad \text{for all $x \in A_1'$.}
\end{eqnarray*}
As   $\|\sqrt{s (x, \cdot)} \| \leq 1$,  $\sqrt{\pi({s}) (x,\cdot)} = \sqrt{s (x, \cdot)}$ and hence
\begin{eqnarray*}
\left\|\sqrt{s (x, \cdot)} - \sqrt{\pi (f) (x,\cdot)}  \right\|^2  \leq   \left\|\sqrt{s (x, \cdot)} - \sqrt{f(x,\cdot)}  \right\|^2 \quad \text{for all $x \in A_1'$.}
\end{eqnarray*}
By integrating both inequalities with respect to $x$, 
\begin{eqnarray*}
 \int_{A_1'} \int_{A_2} \left( \sqrt{s (x, \cdot)} - \sqrt{\pi (f) (x,y)}  \right)^2 \d \nu (x) \d \mu (y) &\leq& \int_{A_1'} \int_{A_2} \left( \sqrt{s (x, \cdot)} - \sqrt{f (x,y)}  \right)^2 \d \nu (x) \d \mu (y) \\
&\leq&  2  h^2(s, f).
\end{eqnarray*} 
Since $\nu (A_1 \setminus A_1') = 0$, the left-hand side of the above inequality is merely $2 h^2(s, \pi (f))$, which proves the proposition. \qed

\subsection{Proof of Proposition~\ref{propRisqHelIntegreeHisto1}.}
Let for each $K \in m$, $I_K \subset A_1$ and $J_K \subset A_2$ be such that $K = I_K \times J_K$.
Let $\mathcal{I} = \left\{I_K, \, K \in m \right\}$, and for each $I \in \mathcal{I}$, let $\mathcal{J}_I = \left\{J, \, I \times J \in m\right\}$.
 The partition $m$ can be rewritten as
 $$m = \bigcup_{I \in \mathcal{I} } \left\{I \times J ,\,  J \in \mathcal{J}_I \right\}.$$
 Remark that $$|m| = \sum_{I \in \mathcal{I}} |\mathcal{J}_I|.$$
 We now introduce for all $I \in \mathcal{I}$ and $J \in \mathcal{J}_I$, 
 \begin{eqnarray*}
 N(I \times J) = \sum_{i=1}^n \1_{I } (X_i) \1_{J} (Y_i) \quad \text{and} \quad   M(I) = \sum_{i=1}^n \1_I (X_i).
 \end{eqnarray*}
With these notations, the estimator $\hat{s}_m$ becomes
  $$\hat{s}_m = \sum_{\substack{I \in \mathcal{I} \\ J \in \mathcal{J}_I} } \frac{N(I \times J)}{M(I) \mu(J)} \1_{I \times J}.$$
We define 
\begin{eqnarray*} 
\bar{s}_m  &=& \sum_{\substack{I \in \mathcal{I} \\ J \in \mathcal{J}_I} } \frac{\E[N(I \times J)]}{\E[M(I)] \mu(J)} \1_{I \times J}, \\
{s}^{\star}_m  &=& \sum_{\substack{I \in \mathcal{I} \\ J \in \mathcal{J}_I} } \frac{N(I \times J)}{\E[M(I)] \mu(J)} \1_{I \times J}.
\end{eqnarray*} 
We use the triangular inequality to get
\begin{eqnarray} \label{eqPreuvePropHistoTerm0}
\E \left[h^2(s , \hat{s}_m ) \right] \leq 2 h^2(s, \bar{s}_m) +  4 \E \left[h^2(\bar{s}_m, {s}^{\star}_m ) \right] + 4 \E \left[h^2({s}^{\star}_m,\hat{s}_m )\right].
\end{eqnarray} 
It remains to control both of the three terms appearing in the right-hand side of the above inequality.  The first term can be upper bounded thanks to Lemma~2 of~\cite{BaraudBirgeHistogramme}:
\begin{eqnarray} \label{eqPreuvePropHistoTerm1}
h^2(s, \bar{s}_m) \leq 2 h^2(s, V_m).
\end{eqnarray}
Now,
 \begin{eqnarray*}
h^2(\bar{s}_m, s^{\star}_m) &=& \frac{1}{2 n } \sum_{\substack{I \in \mathcal{I} \\ J \in \mathcal{J}_I}} \left( \sqrt{\frac{\E[N(I \times J)]}{\E[M(I)] \mu(J)}} - \sqrt{  \frac{N(I \times J)}{\E[M(I)] \mu(J)}}\right)^2 \E [M(I)] \mu(J) \\
&=& \frac{1}{2 n} \sum_{\substack{I \in \mathcal{I} \\ J \in \mathcal{J}_I}}\left( \sqrt{\E[N(I \times J)]}- \sqrt{ N(I \times J)}\right)^2 \\
&\leq& \frac{1}{2 n } \sum_{\substack{I \in \mathcal{I} \\ J \in \mathcal{J}_I}}\frac{\left(  \E[N(I \times J)] -   N(I \times J) \right)^2}{  \E[N(I \times J)] }.
\end{eqnarray*}
By taking the expectation of both sides,
\begin{eqnarray} \label{eqPreuvePropHistoTerm2}
\E \left[h^2(\bar{s}_m, s^{\star}_m)\right] \leq \frac{1}{2 n} \sum_{\substack{I \in \mathcal{I} \\ J \in \mathcal{J}_I}}\frac{ \var \left[N(I \times J) \right]}{  \E[N(I \times J)] } \leq  \frac{1}{2 n} \sum_{\substack{I \in \mathcal{I} \\ J \in \mathcal{J}_I}} 1 \leq \frac{|m|}{2 n}.
\end{eqnarray}
As to the third term,
\begin{eqnarray*}
h^2(\hat{s}_m, s^{\star}_m)  &=& \frac{1}{2 n}   \sum_{\substack{I \in \mathcal{I} \\ J \in \mathcal{J}_I} } \left( \sqrt{\frac{N(I \times J)}{M(I) \mu(J)}} - \sqrt{  \frac{N(I\times J)}{\E[M(I)] \mu(J)}}\right)^2 \E [M(I)] \mu (J)     \\
&=& \frac{1}{2 n}   \sum_{\substack{I \in \mathcal{I} \\ J \in \mathcal{J}_I} } \left( \sqrt{{\E[M(I)]}} - \sqrt{M(I)} \right)^2  \frac{N(I \times J)}{M(I)} \\
&=& \frac{1}{2 n }   \sum_{I \in \mathcal{I}  } \left( \sqrt{{\E[M(I)]}} - \sqrt{M(I)} \right)^2  \sum_{J \in \mathcal{J}_I}  \frac{N(I \times J)}{M(I)}\\
&\leq& \frac{1}{2 n}   \sum_{I \in \mathcal{I}} \left|\mathcal{J}_I\right| \left( \sqrt{{\E[M(I)]}} - \sqrt{M(I)} \right)^2.
\end{eqnarray*}
Therefore,
\begin{eqnarray}
\E \left[h^2(\hat{s}_m, s^{\star}_m)  \right] &\leq& \frac{1}{2 n} \sum_{I \in \mathcal{I}} \left|\mathcal{J}_I\right|  \E \left[ \frac{\left(M(I) - \E[M(I)]\right)^2}{\E[M(I)]} \right]  \nonumber \\
&\leq& \frac{1}{2 n} \sum_{I \in \mathcal{I}}   \left|\mathcal{J}_I\right| \frac{\text{var} [M(I)]}{\E[M(I)]} \nonumber \\
&\leq&  \frac{1}{2 n} \sum_{I \in \mathcal{I}}  \left|\mathcal{J}_I\right|  \nonumber \\
&\leq& \frac{|m|}{2 n}. \label{eqPreuvePropHistoTerm3}
\end{eqnarray}
Gathering (\ref{eqPreuvePropHistoTerm0}), (\ref{eqPreuvePropHistoTerm1}), (\ref{eqPreuvePropHistoTerm2}) and (\ref{eqPreuvePropHistoTerm3}) leads to the result. \qed

\subsection{Proof of Proposition~\ref{propRisqHelIntegreeHisto2}.}
We use in this proof the notations introduced in the proof of Proposition~\ref{propRisqHelIntegreeHisto1}. We derive from the triangular inequality  and (\ref{eqPreuvePropHistoTerm1}),  
\begin{eqnarray} \label{eqDansPreuvepropRisqHelIntegreeHisto2}
h^2(s , \hat{s}_m )  \leq 4 h^2(s,  V_m) +  4 h^2(\bar{s}_m, {s}^{\star}_m )  + 4 h^2({s}^{\star}_m,\hat{s}_m ),
\end{eqnarray} 
and it remains to bound  the two last terms from above. Yet,
\begin{eqnarray*}
h^2(\bar{s}_m, s^{\star}_m)  &=& \frac{1}{2 n} \sum_{\substack{I \in \mathcal{I} \\ J \in  \mathcal{J}_I}} \left( \sqrt{\E[N(I \times J)]}- \sqrt{ N(I \times J)}\right)^2.
\end{eqnarray*}
Theorem~8 of~\cite{BaraudBirgeHistogramme} (applied with $A = 1$ and $\kappa = 1$) shows that, for all $x > 0$, with probability larger than $1 - e^{-x}$,
$$\sum_{\substack{I \in \mathcal{I} \\ J \in  \mathcal{J}_I}} \left( \sqrt{\E[N(I \times J)]}- \sqrt{ N(I \times J)}\right)^2\leq 8 |m| + 202   x.$$
Now, 
\begin{eqnarray*}
h^2(\hat{s}_m, s^{\star}_m)  &=& \frac{1}{2 n}   \sum_{\substack{I \in \mathcal{I} \\ J \in \mathcal{J}_I} } \left( \sqrt{\frac{N(I \times J)}{M(I) \mu(J)}} - \sqrt{  \frac{N(I\times J)}{\E[M(I) \mu(J)]}}\right)^2 \E [M(I)] \mu (J)     \\
&=& \frac{1}{2 n }   \sum_{\substack{I \in \mathcal{I} \\ J \in \mathcal{J}_I} } \left( \sqrt{{\E[M(I)]}} - \sqrt{M(I)} \right)^2  \frac{N(I \times J)}{M(I)} \\
&=& \frac{1}{2 n }   \sum_{I \in \mathcal{I}  } \left( \sqrt{{\E[M(I)]}} - \sqrt{M(I)} \right)^2  \sum_{J \in \mathcal{J}_I}  \frac{N(I \times J)}{M(I)}\\
&\leq& \frac{1}{2 n}   \sum_{I \in \mathcal{I}} \left( \sqrt{{\E[M(I)]}} - \sqrt{M(I)} \right)^2.
\end{eqnarray*}
A new application of Theorem~8 of~\cite{BaraudBirgeHistogramme} shows that  for all $x > 0$, with probability larger than $1 - e^{-x}$,
$$ \sum_{I \in \mathcal{I}} \left( \sqrt{{\E[M(I)]}} - \sqrt{M(I)} \right)^2 \leq 8 |\mathcal{I}| + 202   x.$$
We then deduce from (\ref{eqDansPreuvepropRisqHelIntegreeHisto2}) that for all $x > 0$, with probability larger than $1 - 2 e^{-x}$,
\begin{eqnarray*}
h^2(s, \hat{s}_m)  \leq 4 h^2(s,  V_m)  + 16  \frac{ |m| + |\mathcal{I}|}{n}+ 808 \frac{x}{n}
\end{eqnarray*}
The result follows with $x = n \xi + \log 2$. \qed

\subsection{Proof of Theorem~\ref{thmSelectionModelGeneral}.}
Let, for each model $V \in \V$,     $T_V$ be a subset   of $V$  satisfying the two following conditions:
\begin{enumerate}[-]
\item for all $g \in V$, there exists $f \in T_V$ such that $d_2 (f,g) \leq 1/n$
\item  $$\left|\left\{f \in T_V, \, d_2(f,0) \leq 2 \right\} \right| \leq \left(4  n  + 1 \right)^{\dim V}.$$
\end{enumerate}
For instance, we can define $T_V$ as a maximal $1/n$-separated subset of $V$ in the metric space $(\L^2(A, \nu \otimes \mu), d_2)$ in the sense of Definition~5 of~\cite{BirgeTEstimateurs}.  The bound on the cardinality is then given by  Lemma~4 of~\cite{BirgeTEstimateurs}.
Let
$$S_V = \left\{f_+^2, \; f \in T_V, \, d_2(f,0) \leq 2 \right\} \cup \{0\} \quad \text{and} \quad S = \bigcup_{V \in \V} S_V.$$  
We now define the map $\bar{\Delta}$  on $S$ by
$$\bar{\Delta} (f) = \inf_{\substack{ V \in \V \\ S_V  \ni f }}  \left\{\Delta (V) + \log |S_V | \right\}.$$
Without loss of generality, we can assume that $S \subset \mathcal{L} (A,  \mu)$ (thanks to Proposition~\ref{propRisqueDeStilde}). Theorem~\ref{theoremConditionalDensity} shows that there exists an estimator $\hat{s}$  satisfying for all $\xi > 0$ and probability larger than $1 - e^{- n \xi}$,
\begin{eqnarray*}
 h^2(s,\hat{s} )  \leq  c_1 \inf_{f \in S} \left\{ h^2 (s,f) + L \frac{\bar{\Delta}(f)}{n}\right\}  + c_2\xi.
\end{eqnarray*}
Hence,
\begin{eqnarray*}
 h^2(s,\hat{s} )  &\leq&  c_1 \inf_{V \in \V  } \left\{ h^2 (s,S_V) + L \frac{\Delta(V) + \log |S_V|}{n}\right\}  + c_2\xi \\
 &\leq& c_1 \inf_{V \in \V  } \left\{ \kappa \inf_{ \substack{f \in T_V \\ d_2(f,0) \leq 2}}  d_2^2 (\sqrt{s}, f) + L \frac{\Delta(V) + \log |S_V|}{n}\right\}  + c_2\xi.
\end{eqnarray*}
As $d_2 (\sqrt{s}, 0) \leq 1$ and $0 \in S_V$,  
\begin{eqnarray*}
\inf_{\substack{f \in T_V \\ d_2(f,0) \leq 2}}  d_2 (\sqrt{s}, f) &=&  \inf_{ \substack{f \in T_V}}  d_2 (\sqrt{s}, f) \\
&\leq& \inf_{ \substack{f \in V}}  d_2 (\sqrt{s}, f) + 1/n.
\end{eqnarray*} 
Therefore,
\begin{eqnarray*}
 h^2(s,\hat{s} )  &\leq& c_1 \inf_{V \in \V  } \left\{ 2\kappa   d_2^2 (\sqrt{s}, V) + 2 \frac{\kappa}{n^2}  + L \frac{\Delta(V) + \log \left(1 + (4  n + 1)^{\dim V} \right)}{n}\right\}  + c_2\xi  \\
 &\leq& C \left(  \inf_{V \in \V  } \left\{ \kappa   d_2^2 (\sqrt{s}, V)   +  \frac{\Delta(V) + (\dim V) \log n}{n}\right\}   +  \frac{\kappa}{n^2} + \xi  \right)
\end{eqnarray*}
for $C$ large enough. \qed

\subsection{Structural assumptions.} \label{SectionStructuralAssumptions}  
Theorem~2 and Corollary~1  of~\cite{BaraudComposite} are useful tools to deal with structural assumptions. They show  how to build collections $\V$ of linear spaces $V$ with good approximation properties with respect to composite functions $f$ of the form $f = g \circ u$.
Using these results is the strategy of~\cite{SartMarkov} to get bounds on $\varepsilon_{\F} (f)$  for classes $\F$ corresponding to structural assumptions on $s$.  Nevertheless, this direct application of the results of~\cite{BaraudComposite} (with $\tau = \log n / n$)  leads to an unnecessary additional logarithmic term in the risk bounds.  A careful look at the proof of Theorem~2 of~\cite{BaraudComposite}  shows that the following result holds.

\begin{thm}
Suppose that Assumption~\ref{hypDensiteDesXiCasGeneral} holds and that $\nu \otimes \mu (A) = 1$.  Let $l \in \N^{\star}$ and $\mathcal{L}^{\infty} ([0,1]^l)$ be  the set of bounded functions on $[0,1]^l$ endowed with the supremum distance
$$d_{\infty} (g_1,g_2) = \sup_{x \in [0,1]^l} |g_2(x) - g_1(x)| \quad \text{for $g_1, g_2 \in \mathcal{L}^{\infty} ([0,1]^l)$.}$$
Let $\mathcal{U}$ be the set of  functions $u = (u_1,\dots,u_l)$ going from $A$ to $[0,1]^l$ and
$$\F = \left\{g \circ u, \; g   \in \bigcup_{\boldsymbol{\alpha} \in (0,1]^{l} }\mathcal{H}^{\boldsymbol{\alpha}} ([0,1]^l), \, u \in \mathcal{U} \right\}.$$
 Let $\mathbb{F}$ be an at most countable collection of finite dimensional linear subspaces $F$ of $\mathcal{L}^{\infty} ([0,1]^l)$ endowed with a map $\Delta_{\mathbb{F}} \geq 1$ satisfying $$\sum_{F \in \mathbb{F}} e^{- \Delta_{\mathbb{F}} (F)} \leq 1.$$
Let, for all $j \in \{1,\dots, l\}$, $\mathbb{T}_j$ be  an at most countable collection of  subsets $T$ of $\L^2 (A, \nu \otimes \mu)$. We assume that each $T$ is either a unit set, or a finite dimensional linear space. If $T$ is a singleton, we set $\dim T = 0$. If $T$ is a non trivial linear space, $\dim T$ stands for its usual linear dimension. We endow $\mathbb{T}_j$   with a non-negative map $\Delta_{\mathbb{T}_j} $ satisfying $$\sum_{T \in \mathbb{T}_j} e^{- \Delta_{\mathbb{T}_j} (T)} \leq 1.$$ 
Then, there exist a collection $\V$ and a map $\Delta$ such that for all function $f \in \F$ of the form $f = g \circ u$, with   $g \in \mathcal{H}^{\boldsymbol{\alpha}} ([0,1]^l)$ for some $\boldsymbol{\alpha} = (\alpha_1,\dots, \alpha_l) \in (0,1]^{l}$, and    $u = (u_1,\dots, u_l) \in \mathcal{U}$,
\begin{eqnarray*}
C \varepsilon_{\F} (f)  &\leq& \sum_{j=1}^l \inf_{T \in \mathbb{T}_j}  \left\{l  |g_j|_{\alpha_j}^2 d_2^{2 \alpha_j} (u_j, T) + \frac{\Delta_{\mathbb{T}_j} (T) + (\dim T) \mathcal{L}_{j,T} }{n} \right\}  \\
& & \quad + \inf_{F \in \mathbb{F}} \left\{d_{\infty}^2  (g, F) + \frac{\Delta_{\mathbb{F}} (F) + (\dim F) \log n}{n} \right\}.
\end{eqnarray*} 
 In the above inequality, $C$ is a positive universal constant, $g_j$, $|g_j|_{\alpha_j}$ are defined as explained in Section~\ref{SectionDefinitionHolderSpace},  and $  \mathcal{L}_{j,T}$ is defined when $\dim T > 0$ by
 \begin{eqnarray*}
  \mathcal{L}_{j,T}  &=&  \left[\alpha_j^{-1} \log \left(n l   |g_j|_{\alpha_j}^2 / \dim T \right) \right] \vee 1 \\
  &\leq&  C' \left[ \log n \vee \log \left(  |g_j|_{\alpha_j}^2 / \dim T \right) \vee 1 \right]
 \end{eqnarray*} 
 for $C'$  depending only on $l$ and $\alpha_j$. When $\dim T = 0$,  $  \mathcal{L}_{j,T} = 1$.
\end{thm}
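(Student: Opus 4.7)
The strategy is to construct an appropriate collection $\V$ of finite-dimensional linear subspaces of $\L^2(A,\nu\otimes\mu)$ together with a weight function $\Delta$, and then invoke Theorem~\ref{thmSelectionModelGeneral}. The collection will be indexed by tuples $\tau=(F; T_1,\ldots,T_l; N_1,\ldots,N_l)\in \mathbb{F}\times \prod_{j=1}^{l}\mathbb{T}_j\times(\N^\star)^l$, where the integer parameters $N_j$ play the role of scale parameters that will ultimately be optimized pointwise in $f$ so as to produce the claimed logarithmic factors $\mathcal{L}_{j,T}$.

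\textbf{Model construction.} For each such $\tau$, I would reproduce the composite-model machinery from the proof of Theorem~2 of~\cite{BaraudComposite}: discretize the cube $[0,1]^l$ along a regular grid of mesh $1/N_j$ in the $j$-th direction, and define $V_\tau$ as the linear span of products between a basis of $F$ and piecewise-constant approximations on this grid, composed with representatives of $T_1,\ldots,T_l$. A direct computation analogous to the one in~\cite{BaraudComposite} yields
\begin{equation*}
\dim V_\tau \;\leq\; c\Bigl(\dim F+\sum_{j=1}^{l} N_j\,\dim T_j\Bigr),
\end{equation*}
and, for any $f=g\circ u$ with $g\in\mathcal{H}^{\boldsymbol{\alpha}}([0,1]^l)$ and $u=(u_1,\ldots,u_l)\in\mathcal{U}$,
\begin{equation*}
d_2^2(f,V_\tau)\;\leq\; c\Bigl[d_\infty^2(g,F)+\sum_{j=1}^{l} l\,|g_j|_{\alpha_j}^2\bigl(d_2^{2\alpha_j}(u_j,T_j)+N_j^{-2\alpha_j}\bigr)\Bigr],
\end{equation*}
the extra term $N_j^{-2\alpha_j}$ capturing the Hölder-interpolation error due to the grid. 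Set $\Delta(V_\tau)=\Delta_\mathbb{F}(F)+\sum_{j=1}^{l}\bigl(\Delta_{\mathbb{T}_j}(T_j)+2\log(N_j+1)+\log l\bigr)$ plus a universal constant; this choice plainly satisfies the Kraft-type inequality required by Theorem~\ref{thmSelectionModelGeneral}.

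\textbf{Invoking Theorem~\ref{thmSelectionModelGeneral} and optimizing.} Injecting $\V=\{V_\tau\}$ and $\Delta$ into Theorem~\ref{thmSelectionModelGeneral} and then infimizing over $\tau$, the contribution of $F$ produces the second summand of the claimed bound, while the contribution of the $j$-th inner space $T_j$ becomes
\begin{equation*}
l|g_j|_{\alpha_j}^2 d_2^{2\alpha_j}(u_j,T_j)+\frac{\Delta_{\mathbb{T}_j}(T_j)}{n}+\inf_{N_j\geq 1}\left\{l|g_j|_{\alpha_j}^2 N_j^{-2\alpha_j}+\frac{N_j\dim T_j\,\log n+2\log(N_j+1)}{n}\right\}.
\end{equation*}
When $\dim T_j>0$, balancing the bias $|g_j|_{\alpha_j}^2 N_j^{-2\alpha_j}$ against the variance $N_j\dim T_j/n$ selects $N_j\asymp(n l|g_j|_{\alpha_j}^2/\dim T_j)^{1/(2\alpha_j+1)}$, after which elementary algebra reduces the infimum to an expression of order $(\dim T_j)\,\mathcal{L}_{j,T_j}/n$ with the displayed form of $\mathcal{L}_{j,T_j}$. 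When $\dim T_j=0$, $u_j$ is forced to a single function in $T_j$, the choice $N_j=1$ suffices, and $\mathcal{L}_{j,T_j}=1$.

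\textbf{Main obstacle.} Directly citing Theorem~2 of~\cite{BaraudComposite} with its standard penalty scale $\tau=\log n/n$ hard-codes a $\log n$ factor into every variance term and contaminates the sharper components of the risk. The subtlety is therefore that one must revisit the \emph{proof} of that theorem rather than apply its statement, and keep the scales $N_j$ free as long as possible: the overall variance budget $\dim V_\tau\cdot\log n/n$ inherited from Theorem~\ref{thmSelectionModelGeneral} must be redistributed across the $N_j$'s so that, after optimization, the logarithmic factor for the $j$-th inner variance is $\log\bigl(n l|g_j|_{\alpha_j}^2/\dim T_j\bigr)$ rather than $\log n$. Once this bookkeeping is done, the remainder of the argument is routine algebra.
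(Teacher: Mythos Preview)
Your high-level diagnosis is correct: one must revisit the proof of Theorem~2 of \cite{BaraudComposite} rather than apply its statement with $\tau=\log n/n$, and the paper says exactly this. But the concrete mechanism you propose does not deliver the claimed bound.

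The difficulty is in your optimization step. With your construction you have $\dim V_\tau\ge c\,N_j\dim T_j$, so Theorem~\ref{thmSelectionModelGeneral} contributes $N_j(\dim T_j)(\log n)/n$ to the $j$-th variance. Minimizing
\[
l|g_j|_{\alpha_j}^2 N_j^{-2\alpha_j}+\frac{N_j\,\dim T_j\,\log n}{n}
\]
over $N_j\ge 1$ gives, up to constants,
\[
\bigl(l|g_j|_{\alpha_j}^2\bigr)^{1/(2\alpha_j+1)}\left(\frac{\dim T_j\,\log n}{n}\right)^{2\alpha_j/(2\alpha_j+1)},
\]
which is a \emph{power} of $\dim T_j/n$, not the linear-times-log expression $(\dim T_j/n)\,\mathcal{L}_{j,T_j}$ with $\mathcal{L}_{j,T_j}=\alpha_j^{-1}\log\bigl(nl|g_j|_{\alpha_j}^2/\dim T_j\bigr)$. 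For small $\dim T_j/n$ your expression is strictly larger, so your argument does not establish the stated inequality. The ``elementary algebra'' you invoke cannot convert a trade-off of the form $A N^{-2\alpha}+BN$ into $B\log(A/B)$; that logarithmic shape is the signature of balancing $A\eta^{2\alpha}$ against $B\log(1/\eta)$.

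This points to the missing idea. In the construction underlying \cite{BaraudComposite}, the inner spaces $T_j$ are not carried into $\dim V_\tau$; instead each $T_j$ is discretized by an $\eta_j$-net (inside a suitable ball), and one builds, for every choice of net points $(t_1,\dots,t_l)$, the linear space $\{h\circ(t_1,\dots,t_l):h\in F\}$, whose dimension is at most $\dim F$. The net cardinality enters through the \emph{weight}: $\Delta$ picks up a term of order $(\dim T_j)\log(1/\eta_j)$. Hence the $\log n$ from Theorem~\ref{thmSelectionModelGeneral} multiplies only $\dim F$, while the $j$-th contribution is
\[
l|g_j|_{\alpha_j}^2\bigl(d_2^{2\alpha_j}(u_j,T_j)+\eta_j^{2\alpha_j}\bigr)+\frac{\Delta_{\mathbb{T}_j}(T_j)+(\dim T_j)\log(1/\eta_j)}{n},
\]
and optimizing over $\eta_j$ now yields precisely $(\dim T_j/n)\,\mathcal{L}_{j,T_j}$. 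In short, the refinement comes from moving the $T_j$-discretization from the dimension into the penalty, not from ``redistributing'' a fixed $\log n$ budget across the $N_j$'s.
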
  
The proof of (\ref{epsilonFPourRegression}) is almost the same as the one of Corollary~4 of~\cite{SartMarkov}.  The only difference is that we apply the above theorem in place of Theorem~2 of~\cite{BaraudComposite} with $\tau = \log n / n$.

We now turn to the proof of (\ref{eqEpsilonFPourSingleIndex}).  Note that a function $f$ of the form $f(x,y) =  g (<\theta, x> , y)$ can be rewritten as $f(x,y) = g ( u_1 (x,y), u_2(x,y), \dots, u_{1+d_2} (x,y))$ where $u_1 (x,y) = <\theta, x>$ and $u_{j} (x,y) = y$ for $j \in \{2,\dots,1+d_2\}$. There exists   a pair $(\mathbb{F}, \Delta_{\mathbb{F}})$ such that for all  $\boldsymbol{\alpha} \in (0,+\infty)^{1 + d_2} $, $g \in \mathcal{H}^{\boldsymbol{\alpha}} ([0,1]^{1 + d_2})$, 
$$ \inf_{F \in \mathbb{F}} \left\{d_{\infty}^2  (g, F) + \frac{\Delta_{\mathbb{F}} (F) + (\dim F) \log n}{n} \right\} \leq C_1 \left[\left|g \right|_{\boldsymbol{\alpha}, \infty}^{\frac{2(1 +  d_2)}{1 +  d_2 +  2 \bar{\boldsymbol{\alpha}} } } \left(\frac{\log n}{n} \right)^{ \frac{2   \bar{\boldsymbol{\alpha}}}{2   \bar{\boldsymbol{\alpha}} + 1 + d_2} } +  \frac{\log n}{n}  \right]$$
for a constant $C_1$ depending only on $d_2$, $\boldsymbol{\alpha}$ (see, for instance, \cite{BaraudComposite}).  Let for $\theta \in  \R^{d_1}$, $u_{\theta}$ be the function defined  by $u_{\theta}(x,y) = <\theta, x>$ and $T_1$ be the linear space  defined by  $T_1 = \{u_{\theta},   \theta \in \R^{d_1}\}$.     We use the above theorem with $l = 1+d_2$, $\TT _1= \{T\}$, $\Delta_{\TT_1} (T) = 1$, $\TT_j = \{ \{u_j\}\}$, $\Delta_{\TT_j} (\{u_j\}) = 0$ for $j \in \{2,\dots, 1+d_2\}$ to derive  that for all  $\boldsymbol{\alpha} \in (0,+\infty)^{1 + d_2} $, $g \in \mathcal{H}^{\boldsymbol{\alpha}} ([0,1]^{1 + d_2})$, $\theta \in \mathcal{B}_{1} (0,1)$,  and all function $f \in \F$ of the form $f(x,y) = g (<\theta, x> , y)$,
\begin{eqnarray*}
\varepsilon_{\F} (f)  \leq  C_1 \left[\left|g \right|_{\boldsymbol{\alpha}}^{\frac{2(1 +  d_2)}{1 +  d_2 +  2\bar{\boldsymbol{\alpha}} } } \left(\frac{\log n}{n} \right)^{ \frac{2   \bar{\boldsymbol{\alpha}}}{2   \bar{\boldsymbol{\alpha}} + 1 + d_2} } +  \frac{\log n}{n}  \right] + C_2 d_1    \frac{\log n  \vee \log \left(|g|_{\alpha_1 \wedge 1}^2 / d_1\right)}{n} 
\end{eqnarray*} 
where $C_1$ depends only on $d_2$, $\boldsymbol{\alpha}$ and $C_2$ depends only on $\alpha_1 \wedge 1$, $d_2$. \qed

 \bibliography{biblio}
 \bibliographystyle{apalike}
\end{document}